\documentclass{article}
\usepackage[utf8]{inputenc}
\usepackage{amsthm}
\usepackage{amssymb}
\usepackage{amsmath}
\usepackage{breqn}
\usepackage[dvipsnames]{xcolor}
\usepackage{calrsfs}
\usepackage{hyperref}
\usepackage{authblk} 
\usepackage{geometry}
\usepackage{graphicx}
\usepackage{dsfont}
\geometry{a4paper,total={170mm,257mm},left=20mm,top=20mm,}

%
 \newtheorem{thm}{Theorem}[section]
 
 \newtheorem{lem}[thm]{Lemma}
 
 \theoremstyle{definition}
 
 \theoremstyle{remark}

\numberwithin{equation}{section}
\DeclareMathAlphabet{\pazocal}{OMS}{zplm}{m}{n}
\providecommand{\keywords}[1]{\textbf{\textit{Keywords: }} #1}
\providecommand{\subjclass}[1]{\textbf{\textit{Mathematics Subject Classification (2010): }} #1}
\def\blfootnote{\xdef\@thefnmark{}\@footnotetext}

\title{\bf Piezoelectric beam with magnetic effect, time-varying delay and time-varying weights}


\author[1]{\textsc{C. A. S. Nonato} \thanks{carlos.mat.nonato@hotmail.com}}
\affil[1]{\small Department of Mathematics, Federal University of Bahia, Salvador, BA,  Brazil}

\author[2]{\textsc{M. J. Dos Santos} \thanks{jeremias@ufpa.br, $^*$ Corresponding author}}
\affil[2]{\small Faculty of Exact Sciences and Technology, Federal University of Pará, Abaetetuba, PA, Brazil.}

\author[3]{\textsc{C. A. Raposo} \thanks{raposo@ufsj.edu.br}}
\affil[3]{\small Department of Mathematics, 
Federal University of São João del-Rei, MG, Brazil.}

\date{ }
\begin{document}

\maketitle	
	
\begin{abstract}
The main result of this work is to obtain the exponential decay of the solutions of a piezoelectric beam model with magnetic effect and delay term. The dampings are inserted into the equation of longitudinal displacement. The terms of damping, whose weight associated with them varies over time, are of the friction type, and one of them has delay. This work will also address the issue of existence and uniqueness of solution for the model.
\end{abstract}

\noindent\rule{\textwidth}{0.5pt}\\
\noindent\subjclass{Primary 35B40; Secondary 35Q74}\\
\noindent\keywords{Piezoelectric beam $\cdot$ Energy decay $\cdot$ magnetic effect $\cdot$ time-varying delay $\cdot$ time-varying weights}\\
\noindent\rule{\textwidth}{0.5pt}

\maketitle
\section{Introduction}

It is already known, since the 19th century that materials such as quartz, Rochelle salt and  barium titanate under pressure produces electric charge/voltage, this phenomenon is called the direct piezoelectric effect and was discovered by brothers Pierre and Jacques Curie in 1880. This same materials, when subjected to an electric field, produce proportional geometric tension. Such a phenomenon is known as the converse piezoelectric effect and was discovered by Gabriel Lippmann in 1881 \cite{doi:10.1016/B978-0-08-102135-4.00001-1}.

Currently, there is unanimity among researchers of the area, in characterizing piezoelectric materials such as those that have the physical property of transforming mechanical energy into electrical energy and vice versa \cite{isbn:9780849344596,doi:10.1007/978-1-4899-6453-3,doi:10.1007/b101799}, more precisely, they undergo mechanical deformations when placed in an electric field and under mechanical loads they become electrically polarized. This type of special property has great applicability in the modern industry, therefore, these materials have been widely used in the production of electromechanical devices, such as sensors for data collection, transducers for converting electric energy to mechanical energy or vice versa, resonators for timekeeping and telecommunication and actuators.

The ability of piezoelectric structures to generate deformations controlled by electrical field applications and vice versa, attracts the attention of scientists from various areas in order to design mathematical and computational models capable of providing new knowledge and applications of these materials. In particular, Mathematics can provide several tools for studying the solution behavior of piezoelectric beam models, including Numerical Analysis, Dynamic Systems, Controllability and Stabilization.

Due to the fact that magnetic energy has a relatively small effect on the general dynamics, magnetic effects are neglected in piezoelectric beam models. However, in closed loop, the magnetic effect can cause oscillations in the output which results in the instability of the system, this tells us that the magnetic effect can cause a limitation in the performance of the system \cite{doi:10.1063/1.1139566,doi:10.1016/j.simpat.2007.11.005}.

In many studies related to piezoelectric structures, the magnetic effect is neglected and only the mechanical and electrical effects are considered. In general the mechanical effects are modeled using Kirchhoff, Euler-Bernoulli or Mindlin-Timoshenko assumptions for small displacements \cite{isbn:9780471970248,doi:10.1109/CDC.1998.757931,doi:10.1137/1.9780898717471,doi:10.1007/b101799} and electrical and magnetic effects are added to the system generally using electrostatic, quasi-static and fully dynamic approaches \cite{doi:10.1007/978-1-4899-6453-3}. The electrostatic and quasi-static approaches (see for example \cite{DESTUYNDER-1992,doi:10.1109/CDC.1998.757931, doi:10.1137/050629884,doi:doi.org/10.1016/j.crma.2008.12.007,isbn:9780849344596,doi:10.1137/1.9780898717471,doi:10.1007/978-1-4899-6453-3,isbn:9789402412581}), despite being widely used, completely exclude the magnetic effect as well as its couplings with mechanical and electrical effects.

Morris and Özer in \cite{doi:10.1109/CDC.2013.6760341,doi:10.1137/130918319}, proposed, a variational approach, a piezoelectric beam model with a magnetic effect, based on the Euler-Bernoulli and Rayleigh beam theory for small displacement (the same equations for the model are obtained if Mindlin-Timoshenko small displacement assumptions are used \cite{doi:10.1109/ACC.2014.6858862}), they considered an elastic beam covered by a piezoelectric material on its upper and lower surfaces, isolated at the edges and connected to a external electrical circuit to feed charge to the electrodes. As the voltage is prescribed at the electrodes, the following Lagrangean was considered
\begin{eqnarray}
\pazocal{L}=\int_0^T[\mathbf{K}-(\mathbf{P}+\mathbf{E})+\mathbf{B}+\mathbf{W}]\,dt,\label{1:1}
\end{eqnarray}
where $\mathbf{K}$, $\mathbf{P}+\mathbf{E}$, $\mathbf{B}$ and $\mathbf{B}$ represent the (mechanical) kinetic energy, total stored energy, magnectic energy (electrical kinetic) of the beam and the work done by external forces, respectively. For a beam of length $L$ to thickness $h$ and considering $v=v(x,t)$, $w=w(x,t)$  and $p=p(x,t)$ as functions that represent the longitudinal displacement of the center line, transverse displacement of the beam and the total load of
the electric displacement along the transverse direction at each point $x$, respectively. So, one can assume that
\begin{eqnarray}
\begin{aligned}
\mathbf{P}+\mathbf{E}=\frac{h}{2}\int_0^L \left[ \alpha \left( v_x^2+\frac{h^2}{12}w^2_{xx}-2\gamma\beta v_xp_x+\beta p_x^2 \right) \right]\,dx, \quad \mathbf{B}=\frac{\mu h}{2}\int_0^Lp_t^2\,dx, \\
\mathbf{K}=\frac{\rho h}{2}\int_0^L \left( v_t^2+\frac{h^2}{12}w_t^2+w_t^2 \right]\,dx \quad \text{and}\quad \mathbf{W}=-\int_0^L p_xV(t)\,dx,
\end{aligned}\label{1:2}
\end{eqnarray}
where $V(t)$ is the voltage applied at the electrode. From Hamilton's principle for admissible displacement variations $\{v,w,p\}$ of $L$ the zero and observing that the only external force acting on the beam is the voltage at the electrodes (the bending equation is decoupled), they got the system
\begin{eqnarray}
\begin{aligned}
\rho v_{tt}-\alpha v_{xx}+\gamma \beta p_{xx}=&\ 0,\\
\mu p_{tt}-\beta p_{xx}+\gamma\beta v_{xx}=&\ 0.
\end{aligned}\label{1:3}
\end{eqnarray}
where $\rho$, $\alpha$, $\gamma$, $\mu$ and $\beta$ denote the mass density, elastic stiffness, piezoelectric coefficient, magnetic permeability, water resistance coefficient of the beam and the prescribed voltage on electrodes of beam respectively, and in addition, the relationship is considered
\begin{eqnarray}
\alpha =\alpha_1+\gamma^2\beta,\label{1:4}
\end{eqnarray}
they assumed that the beam is fixed at $x=0$ and free at $x=L$, and thus they got (from modeling) the following boundary conditions
\begin{eqnarray}
\begin{aligned}
v(0,t)=\alpha v_x(L,t)-\gamma\beta p_x(L,t)=&\ 0, \\
p(0,t)=\beta p_x(L,t)-\gamma\beta v_x(L,t)=&\ -\frac{V(t)}{h}.
\end{aligned}\label{1:5}
\end{eqnarray}
Then, the authors consider $V(t)=kp_t(L,t)$ (electrical feedback controller) in \eqref{1:5}  and establish strong stabilization for almost all system parameters and exponential stability for system parameters in a null measure set.

It is worth mentioning that it is well known that piezoelectric beams without the magnetic effect, in which they are represented by a wave equation \cite{doi:10.1137/130918319}, are exactly observable \cite{doi:10.1137/1030001} and exponentially stable \cite{doi:10.1007/s10444-004-7629-9}.

Ramos et al. in \cite{doi:10.1051/m2an/2018004} inserted a (mechanical) dissipative term $\delta v_t$ in $\eqref{1:3}_1$, where $\alpha>0$ is a constant and considered the following boundary condition
\begin{eqnarray}
\begin{aligned}
v(0,t)=\alpha v_x(L,t)-\gamma\beta p_x(L,t)=&\ 0, \\
p(0,t)=\beta p_x(L,t)-\gamma\beta v_x(L,t)=&\ 0.
\end{aligned}\label{1:7}
\end{eqnarray}
Note that, using \eqref{1:4}, condition \eqref{1:7} is equivalent to Dirichlet-Neumann condition
\begin{eqnarray}
v(0,t)=p(0,t)=v_x(L,t)=p_x(L,t)=0.
\end{eqnarray}
The authors showed, by using energy method, that the system's energy decays exponentially. This means that the friction term and the magnetic effect work together in order to exponentially stabilizes the system.

Ramos et al. in \cite{doi:10.1007/s00033-019-1106-2} considered the piezoelectric beam with magnetic effect \eqref{1:3} with boundary conditions given by
\begin{eqnarray}
\begin{aligned}
v(0,t)=\alpha v_x(L,t)-\gamma\beta p_x(L,t)+\xi_1\frac{v_t(L,t)}{h}=&\ 0, \\
p(0,t)=\beta p_x(L,t)-\gamma\beta v_x(L,t)+\xi_2\frac{p_t(L,t)}{h}=&\ 0.
\end{aligned}\label{1:8}
\end{eqnarray}
They showed that the system is exponentially stable regardless of any relationship between system parameters and exponential stability is equivalent to exact observability at the boundary.

In parallel to this, due to technological advances in the production and design of precision control mechanisms such as sensors and actuators, there was an increasing need to study the effects of information delay in order to improve the performance and control of these devices \cite{doi:10.1007/978-3-662-05030-9}. Delay effects are present in almost every real mechanical system and in most situations such an effect is inevitable. Therefore, models that take into account the effect of delay are more realistic \cite{doi:10.1007/978-1-4939-2107-2}.

An important example of the delay effect is the active control of civil engineering structures in which different control systems are installed in tall buildings. The control process involves several steps such as vibrational data measurement, filtering and conditioning of this data, computing control forces, transmission of data and signals to actuators, application of control forces necessary to the structure. If force applications are not synchronized due to time delay, it can make the structure unstable \cite{doi:10.1002/(SICI)1096-9845(199711)26:11<1169::AID-EQE702>3.0.CO;2-S}.

In the context of models consisting of partial differential equations, when we insert delay feedback terms into models that were stable, they can become unstable \cite{doi:10.1109/CDC.1985.268529,doi:10.1137/0326040,doi:10.1137/060648891}. Therefore, for these types of models (formed by partial differential equations) we should be careful to analyze each case.

Nicaise et al. in \cite{doi:10.3934/dcdss.2011.4.693}, studied the following wave equation with boundary time-varying delay
\begin{eqnarray}
\begin{aligned}
u_{tt}-\Delta u=&\ 0\quad\text{in}\quad \Omega\times(0,\infty), \\
u=&\ 0 \quad\text{in}\quad \Gamma _D\times(0,\infty),\\
\frac{\partial u}{\partial \nu}=-\mu_1 u_t-\mu_2 u_t(x,t-\tau(t))&\ 0 \quad\text{in}\quad \Gamma _N\times(0,\infty),\\
u(x,0)=u_0(x)\quad\text{and}\quad u_t(x,0)=u_1(x)&\quad \ \ \,  \text{in}\quad\Omega,\\
u_t(x,t-\tau(0))=f_0(x,t-\tau(0))&\quad \ \ \,  \text{in}\quad\Gamma_N\times(0,\tau(0)),
\end{aligned}
\end{eqnarray}
where $\Omega\subset\mathds{R}^n$ is bounded and smooth domain, $\mu_1$ and $\mu_2$  are positive constants, $\nu(x)$ represent the outer unit normal vector to the point $x\in\Gamma$ and $\frac{\partial u}{\partial\nu}$ is the normal derivative, $\Gamma=\Gamma_D\cup \Gamma_N$ is the boundary of $\Omega$.  At work, it was considered
\begin{eqnarray}
\tau\in W^{2,\infty}([0,T]),\quad \forall T>0,\label{1:9} \\
0<\tau_0\leq \tau(t)\leq \overline{\tau},\quad\forall t>0,\label{1:10}
\end{eqnarray}
for some constants $\tau_0$ and $\overline{\tau}$ and there exists $d>0$ such that
\begin{eqnarray}
\mu_2<\sqrt{1-d}\mu_1\label{1:11}
\end{eqnarray}
with
\begin{eqnarray}
\tau'(t)\leq d<1, \quad\forall t>0. \label{1:12}
\end{eqnarray}
With these assumptions, the authors showed that the system is exponentially stable.

Kirane et al. in \cite{doi:10.3934/cpaa.2011.10.667}, considered the following one-dimensional Timoshenko beam model with variable delay $\tau(t)$ in the rotation angle equation
\begin{eqnarray}
\begin{aligned}
\rho_1\varphi_{tt}-\kappa(\varphi_x+\psi)_x=&\ 0\ \quad\text{in}\quad (0,1)\times(0,\infty),\\
\rho_2\psi_{tt}-b\psi_{xx}+\kappa(\varphi_x+\psi )+\mu_1 \psi_t+\mu_2 \psi_t(x,t-\tau(t))=&\ 0\ \quad\text{in}\quad (0,1)\times(0,\infty),
\end{aligned}\label{1:13}
\end{eqnarray}
where $\rho_1$, $\rho_2$, $\kappa$ and $b$ are positive constants related to the beam's physical properties, the delay function $\tau(t)$ satisfies \eqref{1:9}, \eqref{1:10} and \eqref{1:12}. The authors showed that if \eqref{1:11} and $\rho_1/\kappa=\rho_2/b$ holds, then the system is exponentially stable.

Benaissa et al. in \cite{doi:10.14232/ejqtde.2014.1.11} considered the following wave equation with delay and damping weights depending on the time
\begin{eqnarray}
\begin{aligned}
u_{tt}-\Delta u+\mu_1(t)u_t+\mu_2(t)u_t(x,t-\tau)= &\ 0\quad\text{in}\quad \Omega\times(0,\infty),\\
u=&\ 0 \quad\text{in}\quad \Gamma\times(0,\infty),\\
u(x,0)=u_0(x)\quad\text{and}\quad u_t(x,0)=u_1(x)&\quad \ \ \,  \text{in}\quad\Omega,\\
u_t(x,t-\tau(0))=f_0(x,t-\tau(0))&\quad \ \ \,  \text{in}\quad\Omega\times(0,\tau(0)),
\end{aligned}\label{1:14}
\end{eqnarray}
where $\Omega\subset\mathds{R}^n$ is a bouded domain with boundary $\Gamma$. Unlike previous works, the dampings $\mu_1$ and $\mu_2$ depend on the time $t$, however the delay time $\tau$ is constant. Under appropriate assumptions about the weights of the damping $\mu_1$ and $\mu_2$ the authors obtained the exponential decay of the energy of the system.

Barros et al. in \cite{10.3934/era.2020014} studied the problem \eqref{1:14} with $\Omega=(0,L)\subset\mathds{R}$ and $\tau=\tau(t)$ a function of time $t$. Under appropriate assumptions for $\mu_1(t)$ and $\mu_2(t)$ and considering \eqref{1:9}, \eqref{1:10} and \eqref{1:12} the authors showed that the energy of the system decays exponentially.

Our intention in mentioning the last three works was to show situations in which time-dependent delay feedback appears $\tau =\tau(t)$ as well as to show situations in which the weight of the damping may vary, which make the problem worse, undoubtedly more attractive and challenging.

There are numerous studies on exponential stability of linear systems considering the case where the delay is constant \cite{doi:10.1080/00036811.2014.1000314,doi:10.1186/s13661-015-0468-4,doi:10.1007/s00033-011-0145-0,doi:10.1137/060648891,NICAISE-PIGNOTTI-2011,doi:10.1016/j.jmaa.2018.06.017,RAPOSO-CHUQUIPOMA-AVILA-SANTOS-2013,doi:10.1016/j.amc.2010.08.021,doi:10.1051/cocv:2006021}. There are also several studies considering non-linear models with delay where the existence of attractors is investigated, among them, Timoshenko systems \cite{doi:10.1063/5.0006680,doi:10.1080/00036811.2016.1148139,doi:10.1007/s10884-019-09799-2,doi:10.1007/s00245-018-9539-0}, poroelastic systems \cite{doi:10.3934/dcdsb.2020206} and suspension bridge \cite{doi:10.1007/s00033-018-0934-9,doi:10.1186/s13660-019-2133-4}.

Based on the work mentioned above about piezoelectric beam and delay feedback, we design and propose to study and question the exponential stability for the following system
\begin{eqnarray}
\begin{aligned}
\rho v_{tt} - \alpha v_{xx} + \gamma \beta p_{xx} + \mu_1(t) v_t + \mu_2(t) v_t(x, t - \tau(t))=&\ 0\quad \text{in}\quad (0,L)\times (0,\infty), \\
\mu p_{tt} - \beta p_{xx} + \gamma \beta v_{xx} =&\  0\quad \text{in}\quad(0,L)\times (0,\infty),
\end{aligned}\label{1:15}
\end{eqnarray}
with boundary conditions given by \eqref{1:7} and initial conditions
\begin{eqnarray}
\begin{aligned}
v(x,0)=v_0(x),\quad v_t(x,0)=v_1(x), \quad p(x,0)=p_0(x),\quad  p_t(x,0)=p_1(x),&\quad  x \in (0,L), \\
v_t\left( x,-s\tau(0) \right) = v_2( x,s),&\quad (x,s) \in (0,L) \times (0,1),
\end{aligned}\label{1:16}
\end{eqnarray}
where $v_0$, $v_1$, $v_2$, $p_0$, $p_1$ are known functions belonging to appropriate functional spaces.

In \eqref{1:15} we are admitting that the delay is being considered in the longitudinal displacement of the beam, this seems to us very natural since there are several studies on piezoelectric structures considering the effect of delay on the mechanical part of the system \cite{doi:10.1080/15376490802666310,doi:10.1177/0263092316628255,doi:10.3390/app9081557,doi:10.1088/1361-665x/ab2e3d}.

We will use the standard multiplicative method to obtain the main result. The novelty of the work is found, basically, in the application of this technique in a relatively new model (piezoelectric beam with magnetic effect).

The article is organized as follows: in section \ref{sec:2}, we will consider the assumptions for the functions present in \eqref{1:15} as well as, through a change of variable, obtain a system equivalent to \eqref{1:15}. In section \ref{sec:3}, using the semigroup theory of linear operators found in \cite{isnb:978-3-642-11105-1-KATO}, the question of the existence, uniqueness and regularity of the solution will be addressed. In section \ref{sec:4}, we will obtain the main result of this work, which is the proof of the exponential decay for the system (1.1).

\section{Preliminaries and Assumptions}\label{sec:2}

In this work we will consider the following assumptions:

\begin{description}
\item[(A1)] The delay function $\tau=\tau(t)$, satisfies
\begin{equation}
\tau \in W^{2,\infty}([0,T]), \quad \forall T > 0,\label{2:1}
\end{equation}
there exist positive constants $\tau_0$, $\tau_1$ and $d$, satisfying
\begin{equation}
0<\tau_0\leq \tau(t)\leq \tau_1, \quad  \forall t>0\label{2:2}
\end{equation}
and
\begin{equation}
\tau'(t)\leq d < 1, \quad \forall t >0;\label{2:3}
\end{equation}

\item[(A2)]  $\mu_1:\mathds{R}_+ \rightarrow (0,+\infty)$ is a non-increasing function of class $C^1(\mathds{R}_+)$. In addition, there exists a constant $M_1>0$, such that
\begin{equation}
\left| \frac{\mu'_1(t)}{\mu_1(t)} \right| \leq M_1, \quad \forall t \geq 0;\label{2:4}
\end{equation}

\item[(A3)] $\mu_2:\mathds{R}_+ \rightarrow \mathds{R}$ is a function of class $C^1(\mathds{R}_+)$,which is not necessarily positives or monotones. In addition, there exist constants $M_2>0$ and $\delta$, with $0 < \delta < \sqrt{1-d}$, such that
\begin{equation}
| \mu_2(t)| \leq \delta \mu_1(t),\label{2:5}
\end{equation}
and
\begin{equation}
| \mu'_2(t)| \leq M_2 \mu_1(t).\label{2:6}
\end{equation}
\end{description}

Let us now consider the following procedure that can be found in \cite{doi:10.3934/dcdss.2011.4.693}, in order to obtain a new (independent) variable
\begin{equation}
z(x,y,t) = v_t(x,t-\tau(t) y),\quad (x,y,t)\in (0,L)\times(0,1)\times(0,\infty).\label{2:7}
\end{equation}
It is easily verified that the $z$ satisfies
\begin{eqnarray}
\tau(t)z_t(x,y,t) + (1-\tau'(t)y)z_y(x,y,t) = 0.\label{2:8}
\end{eqnarray}
Therefore, by using \eqref{2:7} and \eqref{2:8} we can rewrite \eqref{1:15} as follows

\begin{eqnarray}
\begin{aligned}
\rho v_{tt} - \alpha v_{xx} + \gamma \beta p_{xx} + \mu_1(t) v_t + \mu_2(t) z(x,1, t)=&\ 0\quad \text{in}\quad (0,L)\times (0,\infty), \\
\mu p_{tt} - \beta p_{xx} + \gamma \beta v_{xx} =&\  0\quad \text{in}\quad(0,L)\times (0,\infty),\\
\tau(t)z_t + (1-\tau'(t)y)z_y =&\ 0\quad \text{in}\quad(0,L)\times(0,1)\times (0,\infty),
\end{aligned}\label{2:9}
\end{eqnarray}
subject to boundary conditions given in \eqref{1:7}, that is,
\begin{eqnarray}
v(0,t)=p(0,t)=v_x(L,t)=p_x(L,t)=0\label{2:10}
\end{eqnarray}
and initial conditions
\begin{eqnarray}
\begin{aligned}
v(x,0)=v_0(x),\quad v_t(x,0)=v_1(x), \quad p(x,0)=p_0(x),\quad  p_t(x,0)=p_1(x),&\quad\text{in}\quad (0,L), \\
z(x,y,0) = v_2(x,y),&\quad\text{in}\quad (0,L) \times (0,1).
\end{aligned}\label{2:11}
\end{eqnarray}

\section{Well-posedness}\label{sec:3}

In this section, using the theory of semigroups of linear operators found in \cite{isnb:978-3-642-11105-1-KATO}, a result of existence, uniqueness and regularity will be obtained for the problem \eqref{2:9}-\eqref{2:11}. Similar procedures are found in \cite{doi:10.3934/cpaa.2011.10.667,doi:10.1007/s00161-017-0556-z,doi:10.3934/dcdss.2011.4.693}.

Firstly, consider the following spaces
\begin{eqnarray}
H_*(0,L) = \{ \eta \in H^1(0,L); \ \eta(0)=0 \}\label{3:1}
\end{eqnarray}
and
\begin{eqnarray}
\pazocal{H} = H_*(0,L) \times L^2(0,L) \times H_*(0,L) \times L^2(0,L) \times L^2((0,L)\times(0,1)).\label{3:2}
\end{eqnarray}
We define on $\pazocal{H}$ the following inner product

\begin{eqnarray}
\begin{aligned}
\langle U, \tilde{U} \rangle_{\pazocal{H}} = &\ \rho \int_0^L u \widetilde{u}\,dx + \mu \int_0^L q \widetilde{q}\,dx + \alpha_1 \int_0^L v_x \tilde{v}_x\,dx \\
& + \beta \int_0^L ( \gamma v_x - p_x )( \gamma \widetilde{v}_x - \widetilde{p}_x )\,dx + \int_0^L \int_0^1 z \tilde{z}\,dy\,dx,
\end{aligned}\label{3:3}
\end{eqnarray}
for any $U = (v,u,p,q,z )$, $\widetilde{U} = (\widetilde{v},\widetilde{u},\widetilde{p},\widetilde{q},\widetilde{z})$ in $\pazocal{H}$.

Introducing $U(t)=(v(t),v_t(t),p(t),p_t(t),z(t))^T$ and $U_0=(v_0,v_1,p_0,p_1,v_2)^T$,  the system \eqref{2:9}-\eqref{2:11} can be written as the following abstract initial value problem in $\pazocal{H}$
\begin{eqnarray}
\left\{\begin{array}{rcl}
U_{t}(t) &=&\pazocal{A}(t) U(t) ,\quad t>0, \\
U(0) &=& U_0,
\end{array}
\right.\label{pc}
\end{eqnarray}
where the operator $\pazocal{A}(t):D(\pazocal{A}(t))\subset\pazocal{H}\to\pazocal{H}$ is given by
\begin{eqnarray}
\pazocal{A}(t)\left(\begin{array}{c}
v\\ u\\ p\\ q\\ z
\end{array}\right) = \left(
\begin{array}{c}
u \\
\rho^{-1} ( \alpha v_{xx} - \gamma \beta p_{xx} - \mu_1(t)u - \mu_2(t)z(\cdot,1) ) \\ q \\ \mu^{-1} (\beta p_{xx}-\gamma\beta v_{xx})\\ -\frac{1-\tau'(t)y}{\tau(t)} z_y
\end{array}
\right),\label{3:4}
\end{eqnarray}
with
\begin{eqnarray}
\begin{aligned}
D(\pazocal{A}(t)) = \{ &(v,u,p,q,z) \in \pazocal{H}; \ v,p\in H^2(0,L);\ u,q\in H_*(0,L); \\
& z\in L^2(0,1;H_0^1(0,L)),\ z(\cdot,0)=u\}.
\end{aligned}
\end{eqnarray}
Note that $D(\pazocal{A}(t))$ is independent of $t$, that is,
\begin{eqnarray}
D(\pazocal{A}(t))=D(\pazocal{A}(0)),\quad \forall t>0.\label{3:5}
\end{eqnarray}

A general theory for not autonomous operators given by equations of type \eqref{pc} has been developed using semigroup theory, see \cite{isbn:978-88-7642-248-5,isnb:978-3-642-11105-1-KATO,doi:10.1007/978-1-4612-5561-1}. The simplest way to prove existence and uniqueness results is to show that the triplet $\{ (\pazocal{A}, \pazocal{H},Y)\}$, with $\pazocal{A} = \{ \pazocal{A}(t);\  t \in [0,T] \}$, for some fixed $T>0$ and $Y=\pazocal{A}(0)$, forms a CD-systems (Constant Domain system, see \cite{isbn:978-88-7642-248-5,isnb:978-3-642-11105-1-KATO}). More precisely, the following theorem, which is due to Tosio Kato (Theorem 1.9 in \cite{isnb:978-3-642-11105-1-KATO}) gives the existence and uniqueness results:

\begin{thm}\label{thm:3:1}
Assume that
\begin{enumerate}
\item[(i)] $Y=D(\pazocal{A}(0))$ is  dense in $\pazocal{H}$;
\item[(ii)]$D(\pazocal{A}(t))=D(\pazocal{A}(0))$, $\forall t>0$;
\item[(iii)] for all $t \in [0,T]$, $\pazocal{A}(t)$ generates a strongly continuous semigroup on $\pazocal{H}$ and the family $\pazocal{A}= \{ \pazocal{A}(t);\ t \in [0,T] \}$ is stable with stability constants $C$ and $m$ independent of $t$ (i.e., the semigroup $(S_t(s))_{s\geq 0}$ generated by $\pazocal{A}(t)$ satisfies $\| S_t(s)W \|_{\pazocal{H}} \leq Ce^{ms} \| W \|_{\pazocal{H}}$, for all $W \in \pazocal{H}$ and $s\geq 0$);
\item[(iv)] $\partial_t \pazocal{A}(t)$ belongs to $L_{*}^{\infty}([0,T],B(Y, \pazocal{H}))$, which is the space of equivalent classes of essentially bounded, strongly measurable functions from $[0,T]$ into the set $B(Y, \mathcal{H})$ of bounded operators from $Y$ into $\pazocal{H}$.
		
Then, problem \eqref{pc} has a unique solution
\begin{eqnarray}
U \in C([0,T);Y) \cap C^1([0,T); \pazocal{H}),\label{3:6}
\end{eqnarray}
for any initial datum in $Y$.
\end{enumerate}
\end{thm}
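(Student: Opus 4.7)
The statement in question is the classical non-autonomous Cauchy problem theorem of Kato for evolution families sharing a common domain, quoted verbatim from the reference \cite{isnb:978-3-642-11105-1-KATO}. In the paper itself the most economical approach is simply to cite Kato's monograph and proceed to verify the four hypotheses $(i)$--$(iv)$ in the following section; below I sketch the ideas that actually go into a proof.

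The plan is to construct a two-parameter evolution family (propagator) $\{U(t,s)\}_{0\le s\le t\le T}$ associated with $\{\pazocal{A}(t)\}_{t\in[0,T]}$ and then show that $t\mapsto U(t,0)U_0$ is the unique classical solution of \eqref{pc} for every $U_0\in Y$. First, I would discretize $[0,T]$ into $n$ equal subintervals, freeze the operator on each piece by setting $\pazocal{A}_n(t):=\pazocal{A}(kT/n)$ for $t\in[kT/n,(k+1)T/n)$, and paste the autonomous $C_0$-semigroups generated (by hypothesis $(iii)$) by each $\pazocal{A}(kT/n)$ to obtain a piecewise-defined propagator $U_n(t,s)$. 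Second, a straightforward induction on the partition index, combined with the uniform stability constants $(C,m)$ from $(iii)$, gives the a priori bound $\|U_n(t,s)\|_{\pazocal{H}}\le Ce^{m(t-s)}$, valid independently of $n$.

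Third, I would show that $\{U_n(t,s)U_0\}_n$ is Cauchy in $\pazocal{H}$ for every $U_0\in Y$. The key identity is the integral representation
\begin{equation*}
U_n(t,s)U_0 - U_m(t,s)U_0 = \int_s^t U_n(t,\sigma)\bigl(\pazocal{A}_n(\sigma)-\pazocal{A}_m(\sigma)\bigr)U_m(\sigma,s)U_0\,d\sigma,
\end{equation*}
and to make sense of the integrand one needs $U_m(\sigma,s)U_0\in Y$ with a uniform bound; this is where the constant-domain assumption $(ii)$ and the Lipschitz regularity of $t\mapsto\pazocal{A}(t)$ from $(iv)$ enter decisively, yielding $\|\pazocal{A}_n(\sigma)-\pazocal{A}_m(\sigma)\|_{B(Y,\pazocal{H})}\to 0$. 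Passing to the limit defines $U(t,s)$ on the dense subspace $Y$ and, by $(i)$ together with the stability bound, extends it continuously to all of $\pazocal{H}$.

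Fourth, I would verify that for $U_0\in Y$ the map $t\mapsto U(t,0)U_0$ lies in $C([0,T];Y)\cap C^1([0,T];\pazocal{H})$ and satisfies $U_t=\pazocal{A}(t)U$; uniqueness follows from a Gronwall argument using the stability bound. The principal obstacle is the third step: controlling $U_n-U_m$ requires simultaneous estimates in the graph norm of $Y$ and in $\pazocal{H}$, and without the interplay between hypotheses $(ii)$, $(iii)$ and $(iv)$ no such convergence can be obtained; all other steps are essentially bookkeeping once this is in place.
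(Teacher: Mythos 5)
The paper does not prove this statement at all: Theorem~\ref{thm:3:1} is quoted verbatim as Kato's theorem on CD-systems (Theorem~1.9 of the cited monograph) and is used as a black box, with the real work of the paper being the verification of hypotheses (i)--(iv) for the concrete operator $\pazocal{A}(t)$ in Theorem~\ref{thm:3:2}. You correctly identify this, and your sketch is a faithful outline of the classical argument behind Kato's result (freeze the operator on a partition, compose the frozen semigroups into approximate propagators $U_n(t,s)$, use the uniform stability constants to bound them, pass to the limit via the telescoping integral identity, then check regularity and uniqueness by Gronwall). The one point I would press you on is the third step: the identity
\begin{equation*}
U_n(t,s)U_0 - U_m(t,s)U_0 = \int_s^t U_n(t,\sigma)\bigl(\pazocal{A}_n(\sigma)-\pazocal{A}_m(\sigma)\bigr)U_m(\sigma,s)U_0\,d\sigma
\end{equation*}
requires that $\sigma\mapsto U_n(t,\sigma)U_m(\sigma,s)U_0$ be differentiable, hence that $U_m(\sigma,s)$ map $Y$ into $Y$ with a bound that is uniform in $m$ \emph{in the graph norm of $Y$}, i.e.\ stability of the approximating family in $Y$ and not merely in $\pazocal{H}$. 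In Kato's general hyperbolic theory this $Y$-stability is a separate hypothesis (usually arranged via an isomorphism $S:Y\to\pazocal{H}$ intertwining $\pazocal{A}(t)$ with a bounded perturbation of itself); for CD-systems it is deduced from (ii) together with (iv), but that deduction is itself a nontrivial lemma rather than bookkeeping. You gesture at this correctly in your final sentence, so as a sketch of a cited classical theorem your proposal is sound; as a self-contained proof it would need that lemma spelled out.
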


In this way, we are ready to state and prove the main result of this section, which is

\begin{thm}[{\bf Global solution}]\label{thm:3:2}
For any $U_0 \in D(\pazocal{A}(0))$, there exists a unique solution $U$ of \eqref{pc} satisfying
\begin{eqnarray}
U \in C([0,+\infty); D(\pazocal{A}(0))) \cap C^1([0,+\infty); \pazocal{H}).
\end{eqnarray}
\end{thm}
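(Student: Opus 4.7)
The plan is to verify the four hypotheses of Theorem \ref{thm:3:1} for the family $\{\pazocal{A}(t)\}_{t\in[0,T]}$ with arbitrary $T>0$, and then to obtain the global solution by letting $T\to+\infty$ using the uniform-in-$T$ character of the estimates.

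Conditions (i) and (ii) are the easiest: density of $D(\pazocal{A}(0))$ in $\pazocal{H}$ follows from a standard approximation argument by smooth functions satisfying the boundary conditions encoded in $H_*(0,L)$, and (ii) has already been recorded in \eqref{3:5}.

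For condition (iii), I would follow the Nicaise--Pignotti approach and replace the inner product \eqref{3:3} by a time-dependent one obtained by weighting the $z$-component with $\bar{\xi}\,\tau(t)\mu_1(t)$, for a constant $\bar{\xi}>0$ to be chosen using $\delta<\sqrt{1-d}$ from (A3). By \eqref{2:2} and the monotonicity of $\mu_1$, the norms $\|\cdot\|_{\pazocal{H}(t)}$ stay uniformly equivalent to $\|\cdot\|_{\pazocal{H}}$ on $[0,T]$, so stability reduces to showing that $\pazocal{A}(t)-\kappa(t)I$ is dissipative in this modified product with $\kappa$ bounded independently of $t$. Computing $\langle\pazocal{A}(t)U,U\rangle_{\pazocal{H}(t)}$, the boundary terms vanish by \eqref{2:10}; the viscous damping contributes $-\mu_1(t)\|u\|_{L^2}^2$; the delay cross term is $-\mu_2(t)\int_0^L u\,z(x,1)\,dx$; and integration by parts in $y$ in the transport line yields $-\tfrac{\bar{\xi}}{2}\mu_1(t)\int_0^L\bigl((1-\tau'(t))z(x,1)^2-u^2\bigr)dx$. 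A Young inequality combined with \eqref{2:5} and $\delta<\sqrt{1-d}$ then makes the resulting quadratic form in $(u,z(\cdot,1))$ non-positive, while the time derivative of the weight is absorbed into $\kappa(t)$ via \eqref{2:3}--\eqref{2:4}. Maximality is obtained by solving $(\lambda I-\pazocal{A}(t))U=F$ for large $\lambda$: the transport equation for $z$ is integrated along characteristics, giving $z$ explicitly in terms of $u$ and the data; the remaining system for $(v,p)$ is then an elliptic problem on $(0,L)$ whose weak formulation on $H_*(0,L)\times H_*(0,L)$ is solved by Lax--Milgram, with coercivity coming from $\alpha=\alpha_1+\gamma^2\beta$ and $\alpha_1,\beta>0$, a structure already built into \eqref{3:3}. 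Lumer--Phillips then delivers, for each fixed $t$, a $C_0$-semigroup generated by $\pazocal{A}(t)$, and the uniform equivalence of the time-dependent norms converts the dissipativity into stability constants $C,m$ that do not depend on $t$.

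Condition (iv) is verified by direct inspection of \eqref{3:4}: the $t$-dependence enters only through $\mu_1(t)$, $\mu_2(t)$, and the factor $(1-\tau'(t)y)/\tau(t)$, so
\begin{equation*}
\partial_t\pazocal{A}(t)(v,u,p,q,z)^T = \Bigl(0,\ -\rho^{-1}\bigl(\mu_1'(t)u+\mu_2'(t)z(\cdot,1)\bigr),\ 0,\ 0,\ \mathcal{K}(t,y)\,z_y\Bigr)^T,
\end{equation*}
where $\mathcal{K}(t,y)$ is uniformly bounded on $[0,T]\times[0,1]$ by \eqref{2:1}--\eqref{2:3}. The bounds \eqref{2:4} and \eqref{2:6}, together with the continuity of the trace $z\mapsto z(\cdot,1)$ from $L^2(0,1;H_0^1(0,L))$, show that $\partial_t\pazocal{A}(t)\in L^\infty_{*}([0,T],B(Y,\pazocal{H}))$. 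The main obstacle I foresee is the careful choice of $\bar{\xi}$ together with the bookkeeping of the lower-order terms generated by $\mu_1'$, $\mu_2'$ and $\tau''$, so that the stability exponent $m$ in (iii) is genuinely independent of $t$; once this uniformity is secured, uniqueness allows one to glue the solutions obtained for each $T$ into a global solution in $C([0,+\infty);D(\pazocal{A}(0)))\cap C^1([0,+\infty);\pazocal{H})$.
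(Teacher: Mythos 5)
Your proposal follows essentially the same route as the paper: verifying Kato's hypotheses (i)--(iv) with the time-dependent inner product weighting the $z$-component by $\bar{\xi}\tau(t)\mu_1(t)$, establishing dissipativity of $\pazocal{A}(t)-\kappa(t)I$ via Young's inequality and the condition $\delta<\sqrt{1-d}$, proving surjectivity by integrating the transport equation along characteristics and applying Lax--Milgram to the resulting elliptic system for $(v,p)$, and checking (iv) by direct differentiation of the operator. The argument is correct and matches the paper's proof in all essential respects, including the final passage from the shifted generator back to $\pazocal{A}(t)$ via the exponential factor $e^{\int_0^t\kappa(s)\,ds}$.
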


\begin{proof}
We must show that $\pazocal{A}(t)$ meets the conditions of Theorem \ref{thm:3:1}. In fact,

\hspace{\oddsidemargin}{\bf (i)} this condition can be proven using arguments analogous to those found in \cite{doi:10.3934/cpaa.2011.10.667,doi:10.1007/s00161-017-0556-z,NICAISE-PIGNOTTI-2011,doi:10.3934/dcdss.2011.4.693}.\\

\hspace{\oddsidemargin}{\bf (ii)} It has been observed in \eqref{3:5}.\\

\hspace{\oddsidemargin}{\bf (iii)} In order to show that the operator $\pazocal{A}(t)$ generates a $C_0$-semigroup on $\pazocal{H}$, given $t$, we introduced the time-dependent inner product on $\pazocal{H}$ (this internal product is equivalent to \eqref{3:3})

\begin{eqnarray}
\begin{aligned}
\langle U, \tilde{U} \rangle_t = &\ \rho \int_0^L u \widetilde{u}\,dx + \mu \int_0^L q \widetilde{q}\,dx + \alpha_1 \int_0^L v_x \tilde{v}_x\,dx \\
&\ + \beta \int_0^L ( \gamma v_x - p_x )( \gamma \widetilde{v}_x - \widetilde{p}_x )\,dx + \xi(t)\tau(t)\int_0^L \int_0^1 z \tilde{z}\,dy\,dx,
\end{aligned}\label{3:7}
\end{eqnarray}
for any $U = (v,u,p,q,z )$, $\widetilde{U} = (\widetilde{v},\widetilde{u},\widetilde{p},\widetilde{q},\widetilde{z})$ in $\pazocal{H}$, where
\begin{eqnarray}
\xi(t)=\overline{\xi}\mu_1(t)\label{3:7:1}
\end{eqnarray}
and $\overline{\xi}$ is a positive constant such that
\begin{eqnarray}
\frac{\delta}{\sqrt{1-d}} < \bar{\xi} < 2 - \frac{\delta}{\sqrt{1-d}}.\label{3:8}
\end{eqnarray}
Note that
\begin{eqnarray}
\begin{aligned}
\langle \pazocal{A}(t)U, U \rangle_t = &\ - \mu_1(t) \int_0^L u^2\,dx - \mu_2(t) \int_0^L z(x,1)u\,dx \\
& - \frac{\xi(t)}{2} \int_0^L \int_0^1 (1-\tau'(t)y)\frac{\partial}{\partial y} z^2(x,y)\,dy\,dx,
\end{aligned}\label{3:9}
\end{eqnarray}
for any $U=(v,u,p,q,z)^T \in D(\pazocal{A}(t))$. Since
\begin{eqnarray}
(1- \tau'(t)y )\frac{\partial}{\partial y} z^2 = \frac{\partial}{\partial y} \left( (1- \tau'(t)y) z^2 \right) + \tau'(t)z^2,\label{3:10}
\end{eqnarray}
from \eqref{3:9} and \eqref{3:10} we have
\begin{eqnarray}
\begin{aligned}
\langle \pazocal{A}(t)U, U \rangle_t = & - \mu_1(t) \int_0^L u^2\,dx - \mu_2(t) \int_0^L z(x,1)u\,dx + \frac{\xi(t)}{2} \int_0^L u^2\,dx \\
& - \frac{\xi(t)(1-\tau'(t))}{2} \int_0^L z^2(x,1)\,dx - \frac{\xi(t)\tau'(t)}{2} \int_0^L \int_0^1 z^2\,dy\,dx.
\end{aligned}\label{3:11}
\end{eqnarray}
Now, applying Young's inequality to the second term on the right side of \eqref{3:11}, we get

\begin{eqnarray}
\begin{aligned}
\langle \pazocal{A}(t)U, U \rangle_t \leq &  - \left( \mu_1(t) - \frac{\xi(t)}{2}- \frac{|\mu_2(t)|}{2\sqrt{1-d}} \right) \int_0^L u^2\,dx \\
& - \left( \frac{\xi(t)}{2} - \frac{\xi(t) \tau'(t)}{2} - \frac{| \mu_2(t)| \sqrt{1-d}}{2} \right)\int_0^L z^2(x,1)\,dx \\
& + \frac{\xi(t)|\tau'(t)|}{2\tau(t)}\tau(t) \int_0^L \int_0^1 z^2\,dy\,dx.
\end{aligned}\label{3:12}
\end{eqnarray}

From {\bf (A3)} and \eqref{3:7:1}, we obtain

\begin{eqnarray}
\begin{aligned}
\langle \pazocal{A}(t)U, U \rangle_t \leq & - \mu_1(t) \left( 1-\frac{\overline{\xi}}{2}- \frac{\delta}{2\sqrt{1-d}} \right) \int_0^L u^2\,dx \\
& - \mu_1(t) \left( \frac{\overline{\xi}(1-\tau'(t))}{2}- \frac{\delta \sqrt{1-d}}{2} \right)\int_0^L z^2(x,1)\,dx \\
& + \kappa(t) \langle U, U \rangle_t,
\end{aligned}\label{3:13}
\end{eqnarray}
where
\begin{eqnarray}
\kappa(t) = \frac{\sqrt{1+\tau'(t)^2}}{2\tau(t)}.\label{3:14}
\end{eqnarray}
From  \eqref{2:2}, \eqref{2:3} and \eqref{3:8}, we have
\begin{eqnarray}
1-\frac{\overline{\xi}}{2}- \frac{\delta}{2\sqrt{1-d}}>0 \quad \mbox{and} \quad \frac{\overline{\xi}(1-\tau'(t))}{2}- \frac{\delta \sqrt{1-d}}{2} >0.\label{3:15}
\end{eqnarray}
Therefore we conclude that
\begin{equation}
\langle \pazocal{A}(t)U, U \rangle_t - \kappa(t)\langle U,U \rangle_t \leq 0,\label{3:16}
\end{equation}
which means that operator $\widetilde{\pazocal{A}}(t) = \pazocal{A}(t) - \kappa(t) I$ is dissipative (in the next steps we will use $\widetilde{\pazocal{A}}$ as a pivot to then recover the intended properties of $\pazocal{A}$).

Now, we will prove the surjectivity of the operator $\lambda I - \pazocal{A}(t)$, for fixed $t>0$. For this purpose, given $F=(f_1,f_2,f_3,f_4,f_5)^T \in \pazocal{H}$, we seek $U=(v,u,p,q,z)^T \in D(\pazocal{A}(t))$ which is solution of
\begin{eqnarray}
(\lambda I - \pazocal{A}(t))U = F,\label{3:17}
\end{eqnarray}
that is, the entries of $U$ satisfy the system of equations
\begin{eqnarray}
\lambda v - u &=& f_1, \label{3:17:1}\\
\lambda \rho u - \alpha v_{xx} + \gamma \beta p_{xx} + \mu_1(t)u + \mu_2(t) z(x,1) &=& \rho f_2, \label{3:17:2}\\
\lambda p - q &=& f_3, \label{3:17:3}\\
\lambda \mu q - \beta p_{xx} +\gamma \beta v_{xx} &=& \mu f_4, \label{3:17:4}\\
\lambda \tau(t)z + (1 - \tau'(t)y ) z_y &=& \tau(t)f_5.\label{3:17:5}
\end{eqnarray}
Suppose that we have found $v$ and $p$ with the appropriated regularity. Therefore, from \eqref{3:17:1} and \eqref{3:17:3} we have
\begin{eqnarray}
u = \lambda v - f_1,\label{3:17:6} \\
q = \lambda p - f_3,\label{3:17:7}
\end{eqnarray}
it is clear that $u,q\in H_*(0,L)$. Furthermore, if $\tau'(t)=0$, then
\begin{eqnarray}
z(x,y) = u(x)e^{-\lambda \tau(t)y} + \tau(t)e^{-\lambda \tau(t)y} \int_0^y f_5(x,s)e^{\lambda \tau(t)s}\,ds\label{3:17:9}
\end{eqnarray}
is solution of \eqref{3:17:5} satisfying
\begin{eqnarray}
z(x,0)=u(x).\label{3:17:10}
\end{eqnarray}
Otherwise,
\begin{eqnarray}
z(x,y) = u(x)e^{\sigma(y,t)} +\tau(t) e^{\sigma(y,t)} \int_0^y \frac{f_5(x,s)}{1-\tau'(t)s} e^{-\sigma(s,t)}\,ds,
\end{eqnarray}
where
\begin{eqnarray}
\sigma(y,t)=\frac{\lambda \tau(t)}{\tau'(t)}\ln(1-\tau'(t)y),
\end{eqnarray}
is solution of \eqref{3:17:5} satisfying \eqref{3:17:10}. From now on, for practicality purposes, we will consider $\tau'(t)=0$ (the case $\tau(t)\neq 0$ is analogous), this way we have (taking into account \eqref{3:17:6})
\begin{eqnarray}
\begin{aligned}
z(x,1) =&\ ue^{-\lambda \tau(t)} + \tau(t)e^{-\lambda \tau(t)} \int_0^1 f_5(x,s)e^{\lambda \tau(t)s}\,ds\\
=&\  (\lambda v-f_1)e^{-\lambda \tau(t)} + \tau(t)e^{-\lambda \tau(t)} \int_0^1 f_5(x,s)e^{\lambda \tau(t)s}\,ds \\
=&\ \lambda ve^{-\lambda \tau(t)}-f_1e^{-\lambda \tau(t)} + \tau(t)e^{-\lambda \tau(t)} \int_0^1 f_5(x,s)e^{\lambda \tau(t)s}\,ds.
\end{aligned}\label{3:17:11}
\end{eqnarray}
Substituting \eqref{3:17:6} and \eqref{3:17:11} in \eqref{3:17:2}, and \eqref{3:17:7} in \eqref{3:17:4}, we obtain
\begin{eqnarray}
\begin{aligned}
\eta v - \alpha v_{xx} + \gamma \beta p_{xx} &=&g_1, \\
\lambda^2 \mu p  - \beta p_{xx} +\gamma \beta v_{xx} &=& g_2,
\end{aligned}\label{3:17:12}
\end{eqnarray}
where
\begin{eqnarray}
\begin{aligned}
\eta:=&\ \lambda^2 \rho +\lambda \mu_1(t)+ \lambda \mu_2(t) e^{-\lambda \tau(t)}, \\
g_1:=&\ \rho f_2+\lambda \rho  f_1+ \mu_1(t)f_1+\mu_2(t)f_1e^{-\lambda \tau(t)}- \mu_2(t)\tau(t)e^{-\lambda \tau(t)} \int_0^1 f_5(x,s)e^{\lambda \tau(t)s}\,ds, \\
g_2:=&\ \mu f_4+ \lambda \mu f_3.
\end{aligned}
\end{eqnarray}
In order to solve \eqref{3:17:12}, we use a standard procedure, considering bilinear form $\Upsilon:((H_*(0,L)\times H_*(0,L))^2\to\mathds{R}$, given by
\begin{eqnarray}
\begin{aligned}
\Upsilon ((v,p),(\widetilde{v}, \widetilde{p})) = & \ \eta \int_0^L v\tilde{v}dx + \alpha \int_0^L v_x \tilde{v}_x\,dx - \gamma \beta \int_0^L p_x \tilde{v}_x\,dx \\
& + \lambda^2 \mu \int_0^L p \tilde{p}\,dx + \beta \int_0^L p_x \tilde{p}_x\,dx - \gamma \beta \int_0^L v_x \tilde{p}_x\,dx.
\end{aligned}
\end{eqnarray}
It is not difficult to show that $\Upsilon$ is continuous and coercive, so by applying the Lax-Milgram's Theorem, we obtain a solution for $(v,p)\in H_*(0,L)\times H_*(0,L)$ for \eqref{3:17:12}. In addition, it follows from \eqref{3:17:2} and \eqref{3:17:4} that $v,p\in H^2(0,L)$ and so $(v,u,p,q,z)\in D(\pazocal{A}(t))$.

Therefore, the operator $\lambda I - \pazocal{A}(t)$ is surjective for all $t>0$. Since $\kappa(t)>0$, we have
\begin{equation}
\lambda I - \widetilde{\pazocal{A}}(t) = (\lambda + \kappa(t))I - \pazocal{A}(t)\quad \text{is surjective}\quad\forall  t>0.\label{3:17:13:1}
\end{equation}
To complete the proof of {\bf (iii)}, it's suffices to prove that
\begin{eqnarray}
\frac{\| \Phi \|_t}{\| \Phi \|_s} \leq e^{\frac{c}{2\tau_0}|t-s|}, \quad \forall  t,s \in [0,T],\label{3:17:13}
\end{eqnarray}
where $\Phi = (v,u,p,q,z)^T$, $c$ is a positive constant and $\|\cdot\|_t$ is the norm associated to the inner product \eqref{3:7}. For $t,s \in [0,T]$, we have
\begin{eqnarray}
\begin{aligned}
\| \Phi \|_t^2 - \| \Phi \|_s^2 e^{\frac{c}{\tau_0}|t-s|} = & \left( 1 - e^{\frac{c}{\tau_0}|t-s|} \right) \int_0^L \left[ \rho u^2+ \mu q^2+ \alpha_1 v_x^2+ \beta (\gamma v_x - p_x )^2 \right]\,dx \\
& + \left( \xi(t)\tau(t) - \xi(s)\tau(s)e^{\frac{c}{\tau_0}|t-s|} \right) \int_0^L \int_0^1 z^2(x,y)\,dy\,dx.
\end{aligned}
\end{eqnarray}
It is clear that $1 - e^{\frac{c}{\tau_0}|t-s|} \leq 0$. Now we will prove $\xi(t)\tau(t) - \xi(s)\tau(s)e^{\frac{c}{\tau_0}|t-s|} \leq 0$ for some $c>0$. In order to do this, from \eqref{2:1} and MVT, we have
\begin{eqnarray}
\tau(t) = \tau(s) + \tau'(r)(t-s),
\end{eqnarray}
for some $r \in (s,t)$. Since $\xi$ is a non increasing function and $\xi>0$, we get
\begin{eqnarray}
\xi(t)\tau(t) \leq \xi(s)\tau(s) + \xi(s)\tau'(r)(t-s),
\end{eqnarray}
which implies
\begin{eqnarray}
\frac{\xi(t)\tau(t)}{\xi(s)\tau(s)} \leq 1 + \dfrac{|\tau'(r)|}{\tau(s)}|t-s|.
\end{eqnarray}
Using \eqref{2:1} and that $\tau'$ is bounded, we deduce that
\begin{eqnarray}
\frac{\xi(t)\tau(t)}{\xi(s)\tau(s)} \leq 1 + \frac{c}{\tau_0}|t-s| \leq e^{\frac{c}{\tau_0}|t-s|},
\end{eqnarray}
which proves \eqref{3:17:13} and therefore {\bf (iii)} follows.\\

\hspace{\oddsidemargin}{\bf (iv)} Note that, from {\bf (A1)}, we have

\begin{eqnarray}
\kappa'(t) = \frac{\tau'(t)\tau''(t)}{2\tau(t)\sqrt{1+\tau'(t)^2}} - \frac{\tau'(t)\sqrt{1+\tau'(t)^2}}{2\tau(t)^2}
\end{eqnarray}
is bounded on $[0,T]$ for all $T>0$. Moreover
\begin{eqnarray}
\frac{d}{dt}\pazocal{A}(t)U = \left( \begin{array}{c}
0 \\
-\rho^{-1}[\mu_1'(t)u+\mu_2'(t)z(\cdot,1)] \\
0 \\
0 \\
\frac{\tau''(t)\tau(t)y-\tau'(t)(\tau'(t)y-1)}{\tau(t)^2}z_y
\end{array} \right),
\end{eqnarray}
Since $\frac{\tau''(t)\tau(t)\rho-\tau'(t)(\tau'(t)\rho-1)}{\tau(t)^2}$ is  bounded on $[0,T]$ by {\bf (A1)}, and considering  {\bf (A2)} and {\bf (A3)}, we have
\begin{eqnarray}
\frac{d}{dt}\tilde{\pazocal{A}}(t) \in L_{*}^{\infty}([0,T], B(D(\mathcal{A}(0)), \mathcal{H})), \label{3:17:15}
\end{eqnarray}
where $L_{*}^{\infty}([0,T], B(D(\mathcal{A}(0)), \mathcal{H}))$ is the space of equivalence classes of essentially bounded, strongly measurable functions from $[0,T]$ into $B(D(\mathcal{A}(0)), \mathcal{H})$.

Then, \eqref{3:16}, \eqref{3:17:13:1} and \eqref{3:17:13} imply that the family $\widetilde{\pazocal{A}} = \{ \widetilde{\pazocal{A}}(t): t \in [0,T] \}$ is a stable family of generators in $\mathcal{H}$ with stability constants independent of $t$, by Proposition $1.1$ from \cite{isnb:978-3-642-11105-1-KATO}. Therefore, the assumptions (i)-(iv) of Theorem \ref{thm:3:1} are verified. Thus, the problem
\begin{equation}
\left\{\begin{array}{rcl}
\widetilde{U}_{t} &=& \widetilde{\pazocal{A}}(t) \widetilde{U},\\
\widetilde{U}(0) &=& U_0
\end{array}
\right.\quad
\end{equation}
has a unique solution $\widetilde{U} \in C([0,+\infty), D(\pazocal{A}(0)) ) \cap C^1([0,+\infty), \pazocal{H} )$ for $U_0 \in D(\pazocal{A}(0))$.
The requested solution of \eqref{pc} is then given by
\begin{eqnarray}
U(t) = e^{\int_0^t \kappa(s)ds}\widetilde{U}(t),
\end{eqnarray}
because
\begin{eqnarray}
\begin{aligned}
U_t(t) =&\ \kappa(t)e^{\int_0^t \kappa(s)\,ds}\widetilde{U}(t) + e^{\int_0^t \kappa(s)\,ds}\widetilde{U}_t(t) \\
=&\ e^{\int_0^t \kappa(s)\,ds} (\kappa(t) + \tilde{\pazocal{A}}(t) ) \widetilde{U}(t) \\
=&\ \pazocal{A}(t)e^{\int_0^t \kappa(s)\,ds}\widetilde{U}(t) \\
=&\ \pazocal{A}(t) U(t)
\end{aligned}
\end{eqnarray}
which concludes the proof.
\end{proof}

\section{Exponential stability}\label{sec:4}

This section is dedicated to study of the asymptotic behavior. We show that the solution of problem \eqref{2:9}-\eqref{2:11} is exponentially stable using the multiplier technique.

We define the energy associated to the  solution $U(t)=(v(t),v_t(t),p(t),p_t(t),z(t))$ of problem \eqref{2:9}-\eqref{2:11} by the following formula
\begin{eqnarray}
E(t) &= & \frac{1}{2} \int_0^L \left[ \rho v_t^2 + \mu p_t^2 + \alpha_1 v_x^2 + \beta( \gamma v_x - p_x)^2 \right]\,dx + \frac{\xi(t)\tau(t)}{2} \int_0^L \int_0^1 z^2\,dy\,dx.\label{4:1}
\end{eqnarray}
Our effort consists in building a suitable Lyapunov functional by the energy method. The main goal in this section is to prove the following stability result.

\begin{thm}\label{thm:4:1}
Let $U(t)=(v(t),v_t(t),p(t),p_t(t),z(t))$ be the solution of \eqref{2:9}-\eqref{2:11} with initial data $U_0\in D(\pazocal{A}(0))$ and $E(t)$ the energy of $U$. Then there exist positive constants $M$ and $\gamma$ such that
\begin{eqnarray}
E(t) \leq ME(0)e^{-\gamma t}, \quad \forall t \geq 0.\label{4:2}
\end{eqnarray}
\end{thm}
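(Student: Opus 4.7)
The strategy is the standard multiplier method: construct a Lyapunov functional $\mathcal{L}(t)$ equivalent to $E(t)$ satisfying $\mathcal{L}'(t)\leq -\gamma_0 E(t)$ for some $\gamma_0>0$, and then conclude by a direct Gronwall comparison.

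First I would differentiate $E(t)$ along the solution of \eqref{2:9}--\eqref{2:11}. Multiplying the first two equations by $v_t$ and $p_t$ and integrating by parts, all boundary contributions vanish thanks to \eqref{2:10}, and the identity $\alpha=\alpha_1+\gamma^2\beta$ reorganizes the elastic terms so that the mechanical part of $E'(t)$ reduces to $-\mu_1(t)\int v_t^2 dx - \mu_2(t)\int v_t z(\cdot,1,t) dx$. For the delay contribution I would use $\tau(t)z_t+(1-\tau'(t)y)z_y=0$ and integrate by parts in $y$ with $z(x,0,t)=v_t(x,t)$; this produces exactly the same structure that already appeared in \eqref{3:11}--\eqref{3:13}. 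Applying Young's inequality to the cross term with weight $\sqrt{1-d}$, using (A3) and the admissible range \eqref{3:8} for $\bar\xi$, together with $\xi'(t)\leq 0$ from (A2), I obtain
\begin{equation*}
E'(t)\leq -c_1\mu_1(t)\int_0^L v_t^2\,dx-c_2\mu_1(t)\int_0^L z^2(x,1,t)\,dx.
\end{equation*}

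To recover negative control of the remaining components of $E(t)$, I would introduce two auxiliary functionals. A Datko-type multiplier for the delay,
\begin{equation*}
F_1(t)=\xi(t)\tau(t)\int_0^L\int_0^1 e^{-2\tau(t)y}z^2(x,y,t)\,dy\,dx,
\end{equation*}
whose time derivative—computed via the transport relation for $z$ and an integration by parts in $y$—yields a negative multiple of $\xi(t)\int_0^L\int_0^1 z^2\,dy\,dx$ at the price of a $\xi(t)\int v_t^2 dx$ term and controlled lower order contributions governed by (A1). And the equipartition multiplier
\begin{equation*}
F_2(t)=\rho\int_0^L v v_t\,dx+\mu\int_0^L p p_t\,dx,
\end{equation*}
whose derivative, again using $\alpha=\alpha_1+\gamma^2\beta$ and integration by parts, equals $\rho\int v_t^2+\mu\int p_t^2-\alpha_1\int v_x^2-\beta\int(\gamma v_x-p_x)^2-\mu_1\int v v_t-\mu_2\int v z(\cdot,1)dx$. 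The last two are absorbed by Young and by Poincaré in $H_*(0,L)$. If the bad $+\mu\int p_t^2$ contribution cannot be handled by $NE'$ alone, a further coupling multiplier (using the off-diagonal term $\gamma\beta v_{xx}$ in the $p$-equation) must be added to extract a negative $\int p_t^2$.

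I would then set $\mathcal{L}(t)=NE(t)+\epsilon_1 F_1(t)+\epsilon_2 F_2(t)$, with $N$ large and $\epsilon_i$ small, check the equivalence $c\,E(t)\leq\mathcal{L}(t)\leq C\,E(t)$, and verify $\mathcal{L}'(t)\leq-\gamma_0 E(t)$, whence $\mathcal{L}(t)\leq\mathcal{L}(0)e^{-(\gamma_0/C)t}$ yields \eqref{4:2}. The hard part will be the absence of direct damping on $p_t$: gaining negative control of $\int p_t^2$ in the Lyapunov functional relies on the mechanical coupling through the off-diagonal terms $\gamma\beta p_{xx}$ and $\gamma\beta v_{xx}$ and on the positive definiteness of the stiffness matrix $\bigl(\begin{smallmatrix}\alpha & -\gamma\beta\\ -\gamma\beta & \beta\end{smallmatrix}\bigr)$ (determinant $\beta\alpha_1>0$), which ensures that the two wave components are genuinely coupled, so no spectral condition on the wave speeds is needed. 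The other technical burden is that every estimate must be compatible with the time-varying weights $\mu_1(t),\mu_2(t)$ and with the variable delay $\tau(t)$: assumptions (A1)--(A3) must be invoked at every step so that the remainders generated by $\xi'(t),\tau'(t),\mu_1'(t),\mu_2'(t)$ are uniformly dominated by the dissipative terms obtained in Step~1.
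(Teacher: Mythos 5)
Your overall architecture coincides with the paper's: the same energy dissipation identity (the paper's Lemma 4.1), the same Datko-type weighted delay functional (your $F_1$ is exactly the paper's $J(t)=\overline{\xi}\tau(t)\int_0^L\int_0^1 e^{-2\tau(t)y}z^2\,dy\,dx$), the equipartition multiplier (your $F_2$ is the paper's $I_3$), and a Lyapunov combination $NE+\cdots$ closed by \eqref{4:17}--\eqref{4:21}. However, there is a genuine gap at precisely the point you yourself flag as ``the hard part.'' Since the $p$-equation carries no damping whatsoever, $E'(t)$ controls only $\int_0^L v_t^2\,dx$ and $\int_0^L z^2(x,1)\,dx$; the term $+\mu\int_0^L p_t^2\,dx$ produced by $F_2$ can therefore \emph{never} be absorbed by $NE'(t)$, no matter how large $N$ is. Your proposal acknowledges this conditionally (``if the bad $+\mu\int p_t^2$ contribution cannot be handled by $NE'$ alone, a further coupling multiplier must be added'') but never constructs that multiplier or verifies that it works. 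This is not an optional refinement: it is the decisive step of the proof, and asserting that the positive definiteness of the stiffness matrix ``ensures that the two wave components are genuinely coupled'' is not a substitute for the computation.

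The paper supplies the missing functional explicitly (Lemma 4.3):
\begin{equation*}
I_2(t)=\rho\int_0^L v_t(\gamma v-p)\,dx+\gamma\mu\int_0^L p_t(\gamma v-p)\,dx,
\end{equation*}
whose derivative, after integration by parts and the cancellation of the $\pm\gamma\beta\int_0^L(\gamma v_x-p_x)^2\,dx$ contributions coming from the two equations, yields the main negative term $-\gamma\mu\int_0^L p_t^2\,dx$ plus cross terms $-\rho\int_0^L v_tp_t\,dx+\gamma^2\mu\int_0^L p_tv_t\,dx$ and remainders in $\int_0^L v_x(\gamma v_x-p_x)\,dx$, all controllable by Young's and Poincar\'e's inequalities. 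A second consequence you should anticipate: $I_2'$ produces a term $\frac{c_2}{\varepsilon_2}\int_0^L v_x^2\,dx$ with a large coefficient, and your $F_2=I_3$ alone cannot absorb it because in $I_3'$ the coefficient of $-\alpha_1\int_0^L v_x^2\,dx$ is rigidly tied to that of $+\mu\int_0^L p_t^2\,dx$. The paper resolves this with a fourth functional, $I_1(t)=\rho\int_0^L vv_t\,dx+\gamma\mu\int_0^L vp_t\,dx$, which delivers $-\frac{\alpha_1}{2}N_1\int_0^L v_x^2\,dx$ at the fixed cost $\varepsilon_1N_1=\mu$ in $\int_0^L p_t^2\,dx$, allowing $N_1$ to be taken large independently. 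Without $I_2$ and $I_1$ (or equivalents) written down and estimated, the closing inequality $\mathcal{L}'(t)\leq-\eta E(t)$ cannot be established.
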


For the proof of Theorem \ref{thm:4:1} we need several lemmas. Our first result states that the energy is a non-increasing function and uniformly bounded above by $E(0)$.

\begin{lem}\label{lem:4:1}
Let $U(t)=(v(t),v_t(t),p(t),p_t(t),z(t))$ be the solution of \eqref{2:9}-\eqref{2:11}. Then the energy $E(t)$ satisfies
\begin{eqnarray}
\begin{aligned}
\frac{d}{dt} E(t) \leq &\ - \mu_1(t) \left( 1-\frac{\overline{\xi}}{2}- \frac{\delta}{2\sqrt{1-d}} \right) \int_0^L v_t^2\,dx \\
&\ - \mu_1(t) \left( \frac{\overline{\xi}(1-\tau'(t))}{2}- \frac{\delta \sqrt{1-d}}{2} \right) \int_0^L z^2(x,1)\,dx \leq 0.
\end{aligned}\label{4:3}
\end{eqnarray}
\end{lem}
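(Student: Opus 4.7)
The plan is to differentiate $E(t)$ directly in time, substitute the three equations of \eqref{2:9}, perform integration by parts in $x$ (for the mechanical coupling) and in $y$ (for the transport-type equation on $z$), and finally apply Young's inequality on the cross term $\mu_2(t)z(x,1)v_t$ exactly as was done for $\langle \pazocal{A}(t)U, U\rangle_t$ in the dissipativity estimate \eqref{3:12}. In fact, the computation is essentially the energy identity associated to that dissipativity computation, plus one extra contribution coming from the explicit time dependence of $\xi(t)\tau(t)$ in the weight of the $z$-integral.

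First I would compute
\[
\frac{d}{dt}E(t)=\int_0^L\!\bigl[\rho v_tv_{tt}+\mu p_tp_{tt}+\alpha_1 v_xv_{xt}+\beta(\gamma v_x-p_x)(\gamma v_{xt}-p_{xt})\bigr]dx+\tfrac{(\xi\tau)'}{2}\!\!\int_0^L\!\!\int_0^1\!z^2\,dy\,dx+\xi(t)\tau(t)\!\int_0^L\!\!\int_0^1\!zz_t\,dy\,dx .
\]
Using $\eqref{2:9}_1$, $\eqref{2:9}_2$, and integrating by parts in $x$, the boundary terms vanish by \eqref{2:10} (since $v_t(0,t)=p_t(0,t)=0$ and $v_x(L,t)=p_x(L,t)=0$); using $\alpha=\alpha_1+\gamma^2\beta$ from \eqref{1:4}, all conservative contributions cancel in pairs and the mechanical part collapses to
\[
-\mu_1(t)\int_0^L v_t^2\,dx-\mu_2(t)\int_0^L v_t\,z(x,1,t)\,dx.
\]

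Next, using $\eqref{2:9}_3$ I would replace $\tau(t)z_t$ by $-(1-\tau'(t)y)z_y$ and apply the identity \eqref{3:10} together with integration by parts in $y$, taking into account the compatibility condition $z(x,0)=v_t(x)$ built into $D(\pazocal{A}(t))$. This turns the $z$-contribution into
\[
\tfrac{\xi'(t)\tau(t)}{2}\!\int_0^L\!\!\int_0^1\!z^2\,dy\,dx+\tfrac{\xi(t)}{2}\!\int_0^L\!v_t^2\,dx-\tfrac{\xi(t)(1-\tau'(t))}{2}\!\int_0^L\!z^2(x,1)\,dx .
\]
Since $\xi(t)=\overline{\xi}\mu_1(t)$ with $\overline{\xi}>0$ and $\mu_1$ non-increasing by \textbf{(A2)}, the first of these three terms is $\leq 0$ and is discarded.

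Finally I would apply Young's inequality to the cross term in the form
\[
\bigl|\mu_2(t)v_t z(x,1)\bigr|\leq\tfrac{|\mu_2(t)|}{2\sqrt{1-d}}v_t^2+\tfrac{|\mu_2(t)|\sqrt{1-d}}{2}z^2(x,1),
\]
and then invoke \textbf{(A3)}, namely $|\mu_2(t)|\leq\delta\mu_1(t)$, together with $\xi(t)=\overline{\xi}\mu_1(t)$, to factor $\mu_1(t)$ out and obtain the two square brackets appearing in \eqref{4:3}. The non-positivity of the right-hand side of \eqref{4:3} then follows immediately from the two inequalities \eqref{3:15} already established in the proof of Theorem \ref{thm:3:2}, which rely on \eqref{2:3} and the calibration \eqref{3:8} of $\overline{\xi}$.

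The only delicate step is the integration by parts in $y$ for the transport term: one must carefully use $(1-\tau'(t)y)\partial_y z^2=\partial_y[(1-\tau'(t)y)z^2]+\tau'(t)z^2$ and combine the resulting $\tfrac{\xi\tau'}{2}\!\int\!\!\int z^2$ with the $\xi\tau'$ part of $(\xi\tau)'/2$ so that they cancel, leaving only the favorable $\xi'\tau\,\int\!\!\int z^2\leq 0$ contribution. Everything else is a direct verification and a single application of Young's inequality, so no further obstruction is expected.
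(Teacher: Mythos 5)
Your proposal is correct and follows essentially the same route as the paper: differentiating $E(t)$ amounts exactly to the paper's multiplication of $\eqref{2:9}_1$, $\eqref{2:9}_2$, $\eqref{2:9}_3$ by $v_t$, $p_t$, $\xi(t)z$ respectively, and your handling of the transport term via \eqref{3:10}, the cancellation of the $\tfrac{\xi\tau'}{2}\iint z^2$ contributions, the weighted Young inequality with parameter $\sqrt{1-d}$, and the use of \textbf{(A2)}, \textbf{(A3)} and \eqref{3:15} all match the paper's argument for \eqref{4:3}.
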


\begin{proof}
Multiplying $\eqref{2:9}_1$ by $v_t$, $\eqref{2:9}_2$ by $p_t$ and integrating each of them by parts over $[0, L] $, we get
\begin{eqnarray}\label{y_1}
\begin{aligned}
 \frac{1}{2} \frac{d}{dt} \int_0^L ( \rho v_t^2 + \alpha_1 v_x^2 )\,dx + \gamma \beta \int_0^L (\gamma v_x - p_x ) v_{xt}\,dx + \mu_1(t)\int_0^L v_t^2\,dx + \mu_2(t)\int_0^L z(x,1)v_t\,dx=0,
\end{aligned}\label{4:4}\\
\begin{aligned}
\frac{1}{2} \frac{d}{dt} \int_0^L \mu p_t^2\,dx - \beta \int_0^L \left(\gamma v_x - p_x \right) p_{xt}\,dx = 0.\label{4:5}
\end{aligned}
\end{eqnarray}
Now multiplying $\eqref{2:9}_3$ by $\xi(t)z$ and integrating over $[0,L] \times [0,1]$, we obtain
\begin{eqnarray}
\frac{\tau(t)\xi(t)}{2}\int_0^L \int_0^1 \frac{d}{dt} z^2\,dy\,dx+\frac{\xi(t)}{2} \int_0^L \int_0^1 (1- \tau'(t)y)\frac{\partial }{\partial y}z^2\,dy\,dx = 0,
\end{eqnarray}
which is equivalent to
\begin{eqnarray}
\begin{aligned}
\frac{d}{dt} \left( \frac{\xi(t)\tau(t)}{2} \int_0^L \int_0^1 z^2\,dy\,dx \right) = &\ \frac{\xi(t)}{2} \int_0^L v_t^2\,dx - \frac{\xi(t)}{2} \int_0^L z^2(x,1)\,dx + \frac{\xi(t)\tau'(t)}{2} \int_0^L z^2(x,1)\,dx \\
 &\ + \frac{\xi'(t)\tau(t)}{2} \int_0^L \int_0^1 z^2\,dy\,dx.
\end{aligned}\label{4:6}
\end{eqnarray}
Combining \eqref{4:4}, \eqref{4:5} and \eqref{4:6}, we obtain
\begin{eqnarray}
\begin{aligned}
\frac{d}{dt} E(t) = & - \mu_1(t)\int_0^L v_t^2\,dx - \mu_2(t)\int_0^L z(x,1) v_t\,dx + \frac{\xi(t)}{2} \int_0^L v_t^2\,dx - \frac{\xi(t)}{2} \int_0^L z^2(x,1)\,dx \\
& + \frac{\xi(t)\tau'(t)}{2} \int_0^L z^2(x,1)\,dx + \frac{\xi'(t)\tau(t)}{2} \int_0^L \int_0^1 z^2\,dy\,dx.
\end{aligned}
\end{eqnarray}
Applying Young's inequality and taking into account \eqref{3:8}, {\bf(A2)} (which results in $\xi'(t)\leq 0$), we have
\begin{eqnarray}
\begin{aligned}
\frac{d}{dt} E(t) \leq &\  -\left( \mu_1(t) - \frac{\xi(t)}{2} - \frac{|\mu_2(t) |}{2\sqrt{1-d}} \right) \int_0^L v_t^2\,dx \\
&\ - \left( \frac{\xi(t)}{2} - \frac{\xi(t)\tau'(t)}{2} - \frac{|\mu_2(t)| \sqrt{1-d}}{2} \right) \int_0^L z^2(x,1)\,dx \\
&\ + \frac{\xi'(t)\tau(t)}{2} \int_0^L \int_0^1 z^2\,dy\,dx \\
\leq &\ -\mu_1(t) \left( 1-\frac{\bar{\xi}}{2}- \frac{\delta}{2\sqrt{1-d}} \right) \int_0^L v_t^2\,dx \\
&\ - \mu_1(t) \left( \frac{\bar{\xi}(1-\tau'(t))}{2}- \frac{\delta \sqrt{1-d}}{2} \right) \int_0^L z^2(x,1)\,dx \leq 0.
\end{aligned}
\end{eqnarray}
Hence, the proof is complete.
\end{proof}

In the previous result we observe that the energy functional restores some energy terms with a negative sign. We are interested in building a Lyapunov functional that restores the full energy of the system with negative sign, and for this goal, we consider the following lemmas.

\begin{lem}\label{lem:4:2}
If $U(t)=(v(t),v_t(t),p(t),p_t(t),z(t))$ is a solution of \eqref{2:9}-\eqref{2:11}, then the functional $I_1$, defined by
\begin{eqnarray}
I_1(t) = \rho \int_0^L v v_{t}\,dx + \gamma \mu \int_0^L v p_{t}\,dx\label{4:7}
\end{eqnarray}
satisfies the estimative
\begin{eqnarray}
\frac{d}{dt} I_1(t) \leq -\frac{\alpha_1}{2} \int_0^L v_x^2\,dx + \varepsilon_1 \int_0^L p_t^2\,dx + c_1 \int_0^L z^2(x,1)\,dx + c_1 \left( 1 + \frac{1}{\varepsilon_1} \right) \int_0^L v_t^2\,dx,\label{4:8}
\end{eqnarray}
for any constants $\varepsilon_1 > 0$ and $c_1 > 0$.
\end{lem}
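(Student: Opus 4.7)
The plan is to differentiate $I_1$ in time, substitute the PDEs to eliminate $v_{tt}$ and $p_{tt}$, integrate by parts to expose $\int_0^L v_x^2\,dx$, and then dominate the remaining cross terms by Young's and Poincaré's inequalities.

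More precisely, I would start with
\begin{equation*}
\tfrac{d}{dt}I_1(t) = \rho\!\int_0^L v_t^2\,dx + \gamma\mu\!\int_0^L v_tp_t\,dx + \int_0^L v(\rho v_{tt})\,dx + \gamma\!\int_0^L v(\mu p_{tt})\,dx,
\end{equation*}
and then use $\eqref{2:9}_1$ and $\eqref{2:9}_2$ to substitute $\rho v_{tt} = \alpha v_{xx} - \gamma\beta p_{xx} - \mu_1(t)v_t - \mu_2(t)z(x,1)$ and $\mu p_{tt} = \beta p_{xx} - \gamma\beta v_{xx}$. Integrating by parts in the resulting $\int_0^L v v_{xx}\,dx$ and $\int_0^L v p_{xx}\,dx$ terms, the boundary terms vanish thanks to \eqref{2:10} ($v(0)=0$ and $v_x(L)=p_x(L)=0$). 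A key observation is that the cross contributions $\pm\gamma\beta\int_0^L v_x p_x\,dx$ coming from the two equations cancel, which is precisely the reason for including the $\gamma\mu\int v p_t\,dx$ term in the definition of $I_1$. After using the relation $\alpha=\alpha_1+\gamma^2\beta$ from \eqref{1:4}, the calculation reduces to
\begin{equation*}
\tfrac{d}{dt}I_1(t) = \rho\!\int_0^L v_t^2\,dx + \gamma\mu\!\int_0^L v_tp_t\,dx - \alpha_1\!\int_0^L v_x^2\,dx - \mu_1(t)\!\int_0^L vv_t\,dx - \mu_2(t)\!\int_0^L vz(x,1)\,dx.
\end{equation*}

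Next I would bound the three undesired terms via Young's inequality. The $\gamma\mu\int v_t p_t$ term yields $\varepsilon_1\int p_t^2\,dx + \frac{(\gamma\mu)^2}{4\varepsilon_1}\int v_t^2\,dx$, which accounts for the $\frac{1}{\varepsilon_1}$ factor in the statement. For the terms $-\mu_1(t)\int v v_t\,dx$ and $-\mu_2(t)\int v z(x,1)\,dx$, Young's inequality with small parameters together with the bounds $\mu_1(t)\le\mu_1(0)$ (monotonicity from \textbf{(A2)}) and $|\mu_2(t)|\le\delta\mu_1(t)\le\delta\mu_1(0)$ (from \textbf{(A3)}) produce constants multiplying $\int v^2\,dx$. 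Applying Poincaré's inequality $\int_0^L v^2\,dx \le C_P^2\int_0^L v_x^2\,dx$ (which is valid because $v(0,t)=0$, so $v(\cdot,t)\in H_*(0,L)$) converts those into multiples of $\int v_x^2\,dx$, which can be absorbed into $\tfrac{\alpha_1}{2}\int v_x^2\,dx$ by choosing the Young parameters small enough.

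Collecting everything and gathering the remaining coefficients into a single constant $c_1>0$ depending on $\rho,\mu,\gamma,\alpha_1,\delta,\mu_1(0),C_P$ (but independent of $\varepsilon_1$) yields \eqref{4:8}. The proof is essentially a routine multiplier computation; the only delicate point is keeping track of the algebraic cancellation of the $\gamma\beta v_x p_x$ terms, which is what allows the pure $-\alpha_1\int v_x^2$ contribution to appear with the right sign, so no further information on $\|p_x\|_{L^2}$ is needed in this lemma.
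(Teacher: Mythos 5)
Your proposal is correct and follows essentially the same route as the paper: differentiate $I_1$, substitute the two equations of \eqref{2:9}, integrate by parts using \eqref{2:10} so that the $\gamma\beta\int_0^L v_xp_x\,dx$ contributions cancel and $\alpha-\gamma^2\beta=\alpha_1$ produces the $-\alpha_1\int_0^L v_x^2\,dx$ term, then bound $\mu_1(t)\le\mu_1(0)$ and $|\mu_2(t)|\le\delta\mu_1(0)$ and finish with Young's and Poincar\'e's inequalities. No gaps.
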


\begin{proof}
Taking derivative of $I_1(t)$, using \eqref{2:9} and integrating by parts, we arrive at
	
\begin{eqnarray}
\begin{aligned}
\frac{d}{dt}I_1(t) = &\ -\alpha_1 \int_0^L v_x^2\,dx + \rho \int_0^L v_t^2\,dx + \gamma \mu \int_0^L p_t v_t\,dx \\
&\ - \mu_1(t) \int_0^L v_t v\,dx - \mu_2(t) \int_0^L z(x,1) v\,dx.
\end{aligned}
\end{eqnarray}
From {\bf(A2)} and {\bf(A3)}, we have
\begin{eqnarray}
\begin{aligned}
\frac{d}{dt}I_1(t) = &\ -\alpha_1 \int_0^L v_x^2\,dx + \rho \int_0^L v_t^2\,dx + \gamma \mu \int_0^L p_t v_t\,dx \\
&\ + \mu_1(0) \int_0^L |v_t v|\,dx + \delta \mu_1(0) \int_0^L |z(x,1)v|\,dx
\end{aligned}
\end{eqnarray}
Estimate \eqref{4:8} follows thanks to Young's and Poincar\'e's inequalities.
\end{proof}

\begin{lem}\label{lem:4:3}
If $U(t)=(v(t),v_t(t),p(t),p_t(t),z(t))$ is a solution of \eqref{2:9}-\eqref{2:11}, then the functional $I_2$, defined by
\begin{eqnarray}
I_2(t) = \rho \int_0^L v_t(\gamma v - p)\,dx + \gamma \mu \int_0^L p_t (\gamma v - p )\,dx\label{4:9}
\end{eqnarray}
satisfies the estimative
\begin{eqnarray}
\begin{aligned}
\frac{d}{dt} I_2(t) \leq &\  - \frac{\gamma \mu}{2} \int_0^L p_t^2\,dx + 3\alpha_1 \varepsilon_2 \int_0^L (\gamma v_x - p_x)^2\,dx +  \frac{c_2}{\varepsilon_2} \int_0^L v_x^2\,dx \\
&\ +\frac{c_2}{\varepsilon_2} \int_0^L z^2(x,1)\,dx + c_2 \left( 1 + \frac{1}{\varepsilon_2} \right) \int_0^L v_t^2\,dx
\end{aligned}\label{4:10}
\end{eqnarray}
for any constants $\varepsilon_2 > 0$ and $c_2 > 0$.
\end{lem}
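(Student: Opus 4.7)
The plan is to differentiate $I_2(t)$ along the trajectory, substitute the two second-order equations from \eqref{2:9} to eliminate $v_{tt}$ and $p_{tt}$, perform one integration by parts in the spatial term using the boundary conditions \eqref{2:10}, and then dispose of all remaining error terms via Young's and Poincar\'e's inequalities. A direct computation gives
\begin{align*}
\frac{d}{dt}I_2(t) =\ & \int_0^L (\alpha v_{xx} - \gamma\beta p_{xx})(\gamma v - p)\,dx + \gamma\int_0^L (\beta p_{xx} - \gamma\beta v_{xx})(\gamma v - p)\,dx \\
&- \mu_1(t)\int_0^L v_t(\gamma v - p)\,dx - \mu_2(t)\int_0^L z(x,1)(\gamma v - p)\,dx \\
&+ \rho\int_0^L v_t(\gamma v_t - p_t)\,dx + \gamma\mu\int_0^L p_t(\gamma v_t - p_t)\,dx.
\end{align*}
The spatial contribution collapses, via the identity $\alpha = \alpha_1 + \gamma^2\beta$, to $\alpha_1\int_0^L v_{xx}(\gamma v - p)\,dx$, which after one integration by parts (all boundary terms vanishing by \eqref{2:10}) becomes $-\alpha_1\int_0^L v_x(\gamma v_x - p_x)\,dx$.

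Next I handle the lower-order time product: expanding $\rho v_t(\gamma v_t - p_t) + \gamma\mu p_t(\gamma v_t - p_t) = \gamma\rho v_t^2 + (\gamma^2\mu - \rho)v_t p_t - \gamma\mu p_t^2$, and bounding the cross term by Young's inequality with parameter $\gamma\mu/2$ produces exactly the desired $-\frac{\gamma\mu}{2}\int_0^L p_t^2\,dx$ plus an $O(1)\int_0^L v_t^2\,dx$ remainder. The spatial piece $-\alpha_1\int_0^L v_x(\gamma v_x - p_x)\,dx$ is then estimated with Young's inequality with parameter $\varepsilon_2$, contributing $\alpha_1\varepsilon_2\int_0^L(\gamma v_x - p_x)^2\,dx + \frac{\alpha_1}{4\varepsilon_2}\int_0^L v_x^2\,dx$, which matches two of the terms in \eqref{4:10}.

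For the two coupling integrals with $\mu_1(t)$ and $\mu_2(t)$, the key observation is that $(\gamma v - p)(0,t)=0$ since $v(0,t)=p(0,t)=0$, so Poincar\'e's inequality yields $\int_0^L(\gamma v - p)^2\,dx \leq C_p\int_0^L(\gamma v_x - p_x)^2\,dx$. I bound $|\mu_1(t)|\leq \mu_1(0)$ by monotonicity (assumption \textbf{(A2)}) and $|\mu_2(t)|\leq \delta\mu_1(0)$ by assumption \textbf{(A3)}, and then apply Young's inequality to each term with the parameter calibrated so that each coupling contributes precisely $\alpha_1\varepsilon_2\int_0^L(\gamma v_x - p_x)^2\,dx$ on the right, with the compensating $O(1/\varepsilon_2)$ weights falling on $\int_0^L v_t^2\,dx$ and $\int_0^L z^2(x,1)\,dx$ respectively. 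Adding the three contributions produces the full $3\alpha_1\varepsilon_2$ coefficient in \eqref{4:10}, and absorbing all the fixed prefactors (depending on $\rho,\mu,\gamma,\alpha_1,L,\delta,\mu_1(0)$) into a single constant $c_2>0$ completes the proof.

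The only delicate point is the coefficient bookkeeping: three separate Young's inequality applications must each be tuned so as to land exactly $\alpha_1\varepsilon_2$ in front of $\int_0^L(\gamma v_x - p_x)^2\,dx$, while the reciprocal weights $\propto 1/\varepsilon_2$ correctly attach to the $v_x^2$, $v_t^2$ and $z^2(x,1)$ terms as prescribed by \eqref{4:10}. Everything else is routine and parallels the proof of Lemma \ref{lem:4:2}.
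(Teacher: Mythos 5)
Your proposal is correct and follows essentially the same route as the paper's proof: differentiate $I_2$, substitute the equations of \eqref{2:9}, use $\alpha=\alpha_1+\gamma^2\beta$ and the boundary conditions \eqref{2:10} to reduce the spatial term to $-\alpha_1\int_0^L v_x(\gamma v_x-p_x)\,dx$, bound $\mu_1(t)$ and $\mu_2(t)$ via \textbf{(A2)}--\textbf{(A3)}, and finish with Young's and Poincar\'e's inequalities. Your accounting of how the three Young applications combine to give the coefficient $3\alpha_1\varepsilon_2$ is in fact more explicit than the paper's, which leaves that step to the reader.
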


\begin{proof}
Taking derivative of $I_2(t)$, using \eqref{2:9} together with integration by parts, we obtain
\begin{eqnarray}
\begin{aligned}
\frac{d}{dt}I_2(t) = &\ - \alpha_1 \int_0^L v_x ( \gamma v_x - p_x )\,dx + \rho \gamma \int_0^L v_t^2\,dx - \rho \int_0^L v_t p_t\,dx + \gamma^2 \mu \int_0^L p_t v_t\,dx \\
&\ - \gamma \mu \int_0^L p_t^2\,dx - \mu_1(t) \int_0^L v_t (\gamma v - p)\,dx - \mu_2(t) \int_0^L z(x,1) (\gamma v - p)\,dx.
\end{aligned}
\end{eqnarray}
From {\bf(A2)} and {\bf(A3)}, we obtain
\begin{eqnarray}
\begin{aligned}
\frac{d}{dt}I_2(t) = & - \alpha_1 \int_0^L v_x (\gamma v_x - p_x)\,dx + \rho \gamma \int_0^L v_t^2\,dx - \rho \int_0^L v_t p_t\,dx + \gamma^2 \mu \int_0^L p_t v_t\,dx \\
&\ - \gamma \mu \int_0^L p_t^2\,dx + \mu_1(0) \int_0^L |v_t (\gamma v-p)|\,dx + \delta \mu_1(0) \int_0^L |z(x,1)(\gamma v-p)|\,dx.
\end{aligned}
\end{eqnarray}
We then use Young's and Poincar\'e's inequalities to obtain \eqref{4:10}.
\end{proof}

\begin{lem}\label{lem:4:4}
If $U(t)=(v(t),v_t(t),p(t),p_t(t),z(t))$ is a solution of \eqref{2:9}-\eqref{2:11}, then the functional $I_3$, defined by
\begin{eqnarray}
I_3(t) = \rho \int_0^L v_t v\,dx + \mu \int_0^L p_t p\,dx\label{4:11}
\end{eqnarray}
satisfies the estimative
\begin{eqnarray}
\begin{aligned}
\frac{d}{dt} I_3(t) \leq & - \frac{\alpha_1}{2}\int_0^L v_x^2\,dx - \beta \int_0^L (\gamma v_x - p_x)^2\,dx + \mu \int_0^L p_t^2\,dx \\
&\ + c_3 \int_0^L v_t^2\,dx + c_3 \int_0^L z^2(x,1)\,dx,
\end{aligned}\label{4:12}
\end{eqnarray}
for any constant $c_3 > 0$.
\end{lem}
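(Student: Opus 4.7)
The plan is to differentiate $I_3$ in time, substitute the PDE's in \eqref{2:9}, integrate by parts with the boundary conditions \eqref{2:10}, and then use Young's and Poincar\'e's inequalities to absorb the lower-order terms into the coercive $v_x^2$ contribution.

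First, I would compute
\begin{eqnarray*}
\frac{d}{dt}I_3(t) = \rho\int_0^L v_{tt}v\,dx + \rho\int_0^L v_t^2\,dx + \mu\int_0^L p_{tt}p\,dx + \mu\int_0^L p_t^2\,dx
\end{eqnarray*}
and replace $\rho v_{tt}$ and $\mu p_{tt}$ using \eqref{2:9}. For the $v$-equation one gets the interior term $\int_0^L(\alpha v_{xx}-\gamma\beta p_{xx})v\,dx$, which integration by parts combined with $v(0,t)=0$ and the mixed condition $\alpha v_x(L,t)-\gamma\beta p_x(L,t)=0$ reduces to $-\int_0^L(\alpha v_x-\gamma\beta p_x)v_x\,dx$. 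Similarly, for the $p$-equation, $p(0,t)=0$ and $\beta p_x(L,t)-\gamma\beta v_x(L,t)=0$ turn $\int_0^L(\beta p_{xx}-\gamma\beta v_{xx})p\,dx$ into $-\int_0^L(\beta p_x-\gamma\beta v_x)p_x\,dx$.

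Next, I would use the relation $\alpha=\alpha_1+\gamma^2\beta$ from \eqref{1:4} to split the first integral as $-\alpha_1\int_0^L v_x^2\,dx -\gamma\beta\int_0^L v_x(\gamma v_x-p_x)\,dx$, and rewrite the second as $+\beta\int_0^L p_x(\gamma v_x - p_x)\,dx$. Adding these two yields the clean combination
\begin{eqnarray*}
-\alpha_1\int_0^L v_x^2\,dx -\beta\int_0^L(\gamma v_x-p_x)^2\,dx.
\end{eqnarray*}
At this stage,
\begin{eqnarray*}
\frac{d}{dt}I_3(t) = -\alpha_1\!\int_0^L\! v_x^2\,dx -\beta\!\int_0^L\!(\gamma v_x-p_x)^2\,dx +\rho\!\int_0^L\! v_t^2\,dx +\mu\!\int_0^L\! p_t^2\,dx -\mu_1(t)\!\int_0^L\! v_t v\,dx -\mu_2(t)\!\int_0^L\! z(x,1)v\,dx.
\end{eqnarray*}

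Finally, I would control the last two (mixed) terms. Using assumption \textbf{(A2)} (monotonicity of $\mu_1$, so $\mu_1(t)\leq\mu_1(0)$) and \textbf{(A3)} (so $|\mu_2(t)|\leq\delta\mu_1(0)$), Young's inequality with a small parameter $\eta>0$ gives
\begin{eqnarray*}
\left|\mu_1(t)\int_0^L v_t v\,dx\right|+\left|\mu_2(t)\int_0^L z(x,1)v\,dx\right| \leq \eta\int_0^L v^2\,dx + C(\eta)\int_0^L v_t^2\,dx + C(\eta)\int_0^L z^2(x,1)\,dx,
\end{eqnarray*}
and by Poincar\'e's inequality on $H_*(0,L)$ (since $v(0,t)=0$) we have $\int_0^L v^2\,dx \leq C_P\int_0^L v_x^2\,dx$. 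Choosing $\eta$ small enough so that $\eta C_P\leq \alpha_1/2$, the $v^2$-term is absorbed into half of $\alpha_1\int_0^L v_x^2\,dx$, and setting $c_3:=\max\{\rho,C(\eta)\}$ (with the constant in front of the Poincar\'e step folded in) produces exactly \eqref{4:12}.

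The only step requiring any care is the boundary algebra: one has to make sure that the two mixed natural boundary conditions in \eqref{2:10} cancel precisely the boundary contributions produced by integration by parts, and that using \eqref{1:4} the interior terms reorganize into the squared combination $(\gamma v_x-p_x)^2$. Everything else is a routine Young/Poincar\'e absorption.
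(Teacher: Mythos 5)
Your proposal is correct and follows essentially the same route as the paper: differentiate $I_3$, substitute the equations, integrate by parts (boundary terms vanish by \eqref{2:10}), use $\alpha=\alpha_1+\gamma^2\beta$ to reassemble the interior terms into $-\alpha_1\int_0^L v_x^2\,dx-\beta\int_0^L(\gamma v_x-p_x)^2\,dx$, and then absorb the $\mu_1,\mu_2$ terms via Young and Poincar\'e. The only difference is that you spell out the absorption of the $\int_0^L v^2\,dx$ term into $\tfrac{\alpha_1}{2}\int_0^L v_x^2\,dx$, which the paper leaves implicit.
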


\begin{proof}
Taking derivative of $I_3(t)$, use \eqref{2:9} and integrating by parts, yield
\begin{eqnarray}
\begin{aligned}
\frac{d}{dt} I_3(t) \leq &\  - \alpha_1 \int_0^L v_x^2\,dx - \beta \gamma^2 \int_0^L v_x^2\,dx + 2 \beta \gamma \int_0^L p_x v_x\,dx - \beta \int_0^L p_x^2\,dx+\rho \int_0^L v_t^2\,dx \\
&\ + \mu \int_0^L p_t^2\,dx - \mu_1(t) \int_0^L v_t v\,dx - \mu_2(t) \int_0^L z(x,1) v\,dx.
\end{aligned}
\end{eqnarray}
From {\bf(A2)} and {\bf(A3)}, and taking into account
\begin{eqnarray}
-\beta(\gamma v_x-p_x)^2 = -\beta \gamma^2 v_x^2 + 2\beta \gamma v_x p_x - \beta p_x^2,
\end{eqnarray}
we have
\begin{eqnarray}
\begin{aligned}
\frac{d}{dt} I_3(t) \leq &\  - \alpha_1 \int_0^L v_x^2\,dx - \beta \int_0^L (\gamma v_x - p_x)^2\,dx + \rho \int_0^L v_t^2\,dx + \mu \int_0^L p_t^2\,dx \\
&\ + \mu_1(0) \int_0^L|v_t v|\,dx + \delta \mu_1(0) \int_0^L |z(x,1) v|\,dx.
\end{aligned}
\end{eqnarray}
	Exploiting Young's and Poincar\'e's inequalities, we obtain the estimates \eqref{4:12} and conclude the prove.
\end{proof}

As in \cite{doi:10.3934/cpaa.2011.10.667}, taking into account the last lemma, we introduce the functional
\begin{equation}
J(t) = \overline{\xi} \tau(t) \int_0^L \int_0^1 e^{-2\tau(t)y} z^2(x,y)\,dy\,dx.\label{4:13}
\end{equation}
For this functional we have the following estimate.

\begin{lem}[{\cite[Lemma 3.7]{doi:10.3934/cpaa.2011.10.667}}]\label{lem:4:5}
Let $U(t)=(v(t),v_t(t),p(t),p_t(t),z(t))$ be solution of \eqref{2:9}-\eqref{2:11}. Then the functional $J(t)$ satisfies
\begin{equation}
\frac{d}{dt}J(t) \leq -2 J(t) + \overline{\xi} \int_0^L v_t^2\,dx.\label{4:14}
\end{equation}
\end{lem}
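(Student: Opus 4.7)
The plan is to differentiate $J(t)$ directly from its definition and use the transport equation for $z$ (the third equation in \eqref{2:9}, which I rewrite as $\tau(t)z_t=-(1-\tau'(t)y)z_y$) to convert the time derivative of $z$ into a $y$-derivative that can be integrated by parts.

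First, applying the product rule to the three $t$-dependent factors in $J(t)$, namely the prefactor $\tau(t)$, the exponential $e^{-2\tau(t)y}$, and $z^2$, I would obtain
\[
\frac{d}{dt}J(t) = \overline{\xi}\tau'(t)\int_0^L\!\!\int_0^1 e^{-2\tau(t)y}z^2\,dy\,dx - 2\overline{\xi}\tau(t)\tau'(t)\int_0^L\!\!\int_0^1 y\,e^{-2\tau(t)y}z^2\,dy\,dx + 2\overline{\xi}\tau(t)\int_0^L\!\!\int_0^1 e^{-2\tau(t)y}z\,z_t\,dy\,dx.
\]

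Next, I would substitute the identity $2\tau(t)z\,z_t = -(1-\tau'(t)y)\partial_y(z^2)$ into the last integral and integrate by parts in $y$. The boundary evaluation at $y=0$ uses the compatibility $z(x,0,t)=v_t(x,t)$ and produces exactly $+\overline{\xi}\int_0^L v_t^2\,dx$; the boundary evaluation at $y=1$ produces $-\overline{\xi}\,e^{-2\tau(t)}(1-\tau'(t))\int_0^L z^2(x,1)\,dx$, which is non-positive because $1-\tau'(t)\geq 1-d>0$ by \eqref{2:3}, and can therefore be dropped. Differentiating the product $e^{-2\tau(t)y}(1-\tau'(t)y)$ in $y$ generates three interior integrals: one proportional to $e^{-2\tau(t)y}(1-\tau'(t)y)$, which yields $-2\overline{\xi}\tau(t)\int_0^L\!\int_0^1 e^{-2\tau(t)y}z^2\,dy\,dx = -2J(t) + 2\overline{\xi}\tau(t)\tau'(t)\int_0^L\!\int_0^1 y\,e^{-2\tau(t)y}z^2\,dy\,dx$, and one proportional to $-\tau'(t)e^{-2\tau(t)y}$.

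The final step is purely bookkeeping: the two $\tau'(t)$-terms arising from the initial product rule cancel exactly against those generated by the $y$-derivative of $e^{-2\tau(t)y}(1-\tau'(t)y)$. After this cancellation and after discarding the non-positive boundary contribution at $y=1$, only $-2J(t)+\overline{\xi}\int_0^L v_t^2\,dx$ remains, which is the claimed estimate. The main obstacle is organizational rather than analytic: one must track the three different $\tau'(t)$-terms carefully enough to see that the cancellation is exact; no sign conditions beyond $\tau'(t)\leq d<1$ and no sharp inequalities are required.
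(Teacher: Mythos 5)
Your computation is correct and complete: the product rule, the substitution $2\tau(t)z\,z_t=-(1-\tau'(t)y)\partial_y(z^2)$, the integration by parts with $z(x,0)=v_t$ at $y=0$, the exact cancellation of the $\tau'(t)$-terms, and the discarding of the non-positive $y=1$ boundary term via $1-\tau'(t)\geq 1-d>0$ all check out. The paper itself gives no proof of this lemma (it simply cites Lemma 3.7 of the Kirane--Said-Houari reference), and your argument is precisely the standard computation behind that cited result.
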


Now we are in position to prove our principal result.
\begin{proof}[Proof of Theorem \ref{thm:4:1}]
We will to construct a suitable Lyapunov functional $\pazocal{L}$ satisfying the following equivalence relation
\begin{eqnarray}
\gamma_1 E(t) \leq \pazocal{L}(t) \leq \gamma_2 E(t), \quad \forall t \geq 0,
\end{eqnarray}
for some $\gamma_1, \gamma_2 > 0$ and to prove that
\begin{eqnarray}
\frac{d}{dt}\mathcal{L}(t) \leq -\lambda \mathcal{L}(t), \quad \forall t \geq 0,
\end{eqnarray}
for some $\lambda>0$, which implies
\begin{eqnarray}
\mathcal{L}(t) \leq \mathcal{L}(0)e^{-\lambda t}, \quad \forall t \geq 0.
\end{eqnarray}	
Let us define the Lyapunov functional
\begin{eqnarray}
\mathcal{L}(t) = N E(t)(t) + \sum_{i=1}^{3}N_i I_i(t) + J(t),\label{4:15}
\end{eqnarray}
where $N_i$, $i=1,2,3$ are positive real numbers which will be chosen later. By the Lemma \ref{lem:4:1}, there exists a positive constant $K$ such that
\begin{eqnarray}
\frac{d}{dt}E(t) \leq -K \left( \int_0^L v_t^2\,dx + \int_0^L z^2(x,1)\,dx \right).\label{4:16}
\end{eqnarray}
We have that
\begin{eqnarray}
\begin{aligned}
| \mathcal{L}(t) - NE(t)| \leq &\ N_1 \left( \rho \int_0^L|v_t v|\,dx + \gamma \mu \int_0^L |p_t v|\,dx\right) \\
&\ + N_2 \left( \rho \int_0^L |v_t ( \gamma v - p )|\,dx + \gamma \mu \int_0^L |p_t (\gamma v - p)|\,dx \right) \\
&\ + N_3 \left( \rho \int_0^L |v_t v |\,dx + \mu \int_0^L |p_t p |\,dx \right) \\
&\ + \left| \overline{\xi}\tau(t) \int_0^L \int_0^1 e^{-2\tau(t)y} z^2\,dy\,dx \right|.
\end{aligned}
\end{eqnarray}
It follows from \eqref{4:1}, Young's and Poincar\'e's inequalities and from the fact that $\tau(t) \leq \tau_1$ for all $t\geq 0$ and $e^{-2\tau(t)y} \leq 1$ for all $y \in (0,1)$ that
\begin{eqnarray}
|\mathcal{L}(t) - NE(t)| \leq \gamma_3 \int_0^L \left[ v_t^2 + p_t^2 + v_x^2 + ( \gamma v_x - p_x)^2 + \int_0^1 z^2\,dy \right]\,dx \leq \gamma_3 E(t)
\end{eqnarray}
for some constant $\gamma_3 > 0$. So, we can choose $N$ large enough that $\gamma_1 := N - \gamma_3$ and $\gamma_2 := N + \gamma_3$, then
\begin{eqnarray}
\gamma_1 E(t) \leq \mathcal{L}(t) \leq \gamma_2 E(t), \quad \forall t \geq 0\label{4:17}
\end{eqnarray}
holds.

Now, taking derivative $\pazocal{L}(t)$, substitute the estimates \eqref{4:8}, \eqref{4:10}, \eqref{4:12}, \eqref{4:13}, \eqref{4:16} and setting
\begin{eqnarray}
N_2 = \frac{8}{\gamma}, \quad N_3=1, \quad \varepsilon_1 = \frac{\mu}{N_1} \quad \mbox{and} \quad \varepsilon_2 = \frac{\beta \gamma}{48 \alpha_1},
\end{eqnarray}
we obtain that
\begin{eqnarray}
\begin{aligned}
\frac{d}{dt}\pazocal{L}(t) \leq &\  -\left[ NK - c_1 \left( 1 + \frac{N_1}{\mu} \right)N_1 - c_2 \left( 1 + \frac{48\alpha_1}{\beta \gamma} \right) \frac{8}{\gamma} - c_3 - \overline{\xi} \right] \int_0^L v_t^2\,dx \\
&\  - \left( NK - c_1 N_1 - \frac{384 \alpha_1 c_2}{\beta \gamma^2} - c_3 \right) \int_0^L z^2(x,1)\,dx \\
& - \left[ \frac{\alpha_1}{2}N_1 - \frac{384 \alpha_1 c_2}{\beta \gamma^2} + \frac{\alpha_1}{2} \right] \int_0^L v_x^2\,dx \\
& - 2\mu \int_0^L p_t^2\,dx - \frac{\beta}{2} \int_0^L (\gamma v_x - p_x)^2\,dx - 2 J(t).
\end{aligned}\label{4:18}
\end{eqnarray}
First, let us choose $N_1$ large enough such that
\begin{eqnarray}
\frac{\alpha_1}{2}N_1 - \frac{384 \alpha_1 c_2}{\beta \gamma^2} + \frac{\alpha_1}{2} > 0.
\end{eqnarray}
Now, since $\xi(t)\tau(t)$ non-negative and limited and choosing $N$ large enough that \eqref{4:18} is taken into the following estimate
\begin{eqnarray}
\begin{aligned}
\frac{d}{dt}\pazocal{L}(t) \leq & - \eta \int_0^L \left[ v_t^2 + p_t^2 + v_x^2 +(\gamma v_x - p_x)^2 + z^2(x,1) + \int_0^1 z^2\,dy \right]\,dx \\
\leq &\  -\eta \int_0^L \left[ v_t^2 + p_t^2 + v_x^2 +(\gamma v_x - p_x)^2 + \int_0^1 z^2\,dy \right]\,dx,
\end{aligned}
\end{eqnarray}
for some positive constant $\eta$.
Therefore, from \eqref{4:1}, we have
\begin{equation}
\frac{d}{dt}\mathcal{L}(t) \leq -\eta E(t), \quad \forall t > 0.\label{4:19}
\end{equation}
In view of \eqref{4:17} and \eqref{4:19}, we note that
\begin{eqnarray}
\frac{d}{dt}\mathcal{L}(t) \leq -\lambda \mathcal{L}(t),\quad  \forall t > 0,\label{4:20}
\end{eqnarray}
which leads to
\begin{equation}
\mathcal{L}(t) \leq \mathcal{L}(0)e^{-\lambda t}, \forall t > 0.\label{4:21}
\end{equation}
The desired result \eqref{4:2} follows by using estimates \eqref{4:17} and \eqref{4:21}. Then, the proof of Theorem \ref{thm:4:1} is complete.
\end{proof}

\bibliographystyle{plain}
\bibliography{biblio}

\end{document}